\renewcommand{\le}{\leqslant}
\renewcommand{\ge}{\geqslant}
\newcommand{\sgn}{\mathop{\rm sgn}\nolimits}
\newcommand{\T}{\mathcal{T}}
\newcommand{\CP}{\mathbb{C}\mathrm{P}}
\newcommand{\RP}{\mathbb{R}\mathrm{P}}
\renewcommand{\P}{\mathcal{P}}
\newcommand{\N}{\mathcal{N}}
\newcommand{\Z}{\mathbb{Z}}
\newcommand{\R}{\mathbb{R}}
\newcommand{\bC}{\mathbb{C}}
\newcommand{\bp}{\mathbf{p}}
\newcommand{\bt}{\mathbf{t}}
\newcommand{\bX}{\overline{X}}
\newcommand{\hU}{\widehat{U}}
\newcommand{\Sym}{\mathop{\mathrm{Sym}}\nolimits}
\newcommand{\hrho}{\hat{\rho}}
\newcommand{\conj}{c}
\newtheorem{theorem}{Theorem}[section]
\newtheorem{propos}[theorem]{Proposition}
\theoremstyle{definition}
\newtheorem{constr}[theorem]{Construction}
\newtheorem{ex}[theorem]{Example}
\newtheorem{defin}[theorem]{Definition}
\newtheorem{remark}[theorem]{Remark}
\author{A. A. Gaifullin}
\thanks{The work was partially supported by the Russian Foundation
for Basic Research (grants no.~08-01-00541-а
and~08-01-91958-NNIO-a) and the Russian Programme for the Support
of Leading Scientific Schools (grant no.~NSh-1824.2008.1).}
\title{A minimal triangulation of complex projective plane
admitting a chess colouring of four-dimensional simplices}
\date{}
\address{Moscow State University}
\address{Institute for Information Transmission Problems}
\begin{document}

\maketitle

\begin{abstract}
In this paper we construct and study a new 15-vertex
triangulation~$X$ of the complex projective plane~$\CP^2$. The
automorphism group of~$X$ is isomorphic to~$S_4\times S_3$. We
prove that the triangulation~$X$ is the minimal by the number of
vertices triangulation of~$\CP^2$ admitting a chess colouring of
four-dimensional simplices. We provide explicit parametrizations
for simplices of~$X$ and show that the automorphism group of~$X$
can be realized as a group of isometries of the Fubini--Study
metric. We provide a 33-vertex subdivision~$\bX$ of the
triangulation~$X$ such  that the classical moment
mapping~$\mu:\CP^2\to\Delta^2$ is a simplicial mapping of the
triangulation~$\bX$ onto the barycentric subdivision of the
triangle~$\Delta^2$. We study the relationship of the
triangulation~$X$ with complex crystallographic groups.
\end{abstract}

\section*{Introduction}

The complex projective plane~$\CP^2$ has a 9-vertex
triangulation~$K$  constructed by W.\,K\"uhnel in 1980
(see~\cite{KuBa83},~\cite{KuLa83}). The simplicial complex~$K$ is
a remarkable combinatorial object and has many interesting
properties.

\begin{itemize}
\item The triangulation~$K$ is the minimal by the number of
vertices triangulation of~$\CP^2$. Moreover, $K$ is minimal among
all four-dimensional triangulated manifolds not homeomorphic to
the sphere.
\item The triangulation~$K$ is 3-neighbourly, that is,
any set of 3 vertices of~$K$ spans a two-dimensional simplex.
\item The automorphism group of~$K$ has order~54. Besides, there
is a homeomorphism $|K|\approx\CP^2$ such that the automorphism
group of~$K$ acts by isometries of the Fubini-Study metric.
\item
The simplicial complex~$K$ has an interesting interpretation in
terms of the affine plane over the 3-element field~\cite{BaDa94}.
\item The triangulation~$K$ has deep relationship with theory of
complex crystallographic groups. In particular, there exist a
rectilinear triangulation~$\widehat{K}$ of~$\bC^2$ and a complex
crystallographic group~$\Gamma$ acting on~$\bC^2$ such that
$\bC^2/\Gamma\approx\CP^2$ and $\widehat{K}/\Gamma=K$
(see~\cite{MoYo91},~\cite{ArMa91}; see also
Section~\ref{section_crystal} of the present paper).
\end{itemize}

In the present paper we construct and study a new 15-vertex
triangulation~$X$ of~$\CP^2$, which possesses several interesting
properties. The vertices of~$X$ admit a regular colouring in~$5$
colours. (The colouring is said to be \textit{regular} if any
four-dimensional simplex contains exactly one vertex of each
colour.) The four-dimensional simplices of~$X$ admit a chess
colouring, that is, a colouring in two colours, black and white,
such that any two simplices possessing a common facet are of
different colours. Moreover, $X$ is the minimal by the number of
vertices triangulation of~$\CP^2$ admitting a regular colouring of
vertices and a chess colouring of four-simplices. (It is easy to
prove that a triangulation of a simply-connected manifold admits a
regular colouring of vertices if and only if it admits a chess
colouring of maximal-dimensional simplices.) The study of
triangulations admitting either regular colourings of vertices or
chess colourings of maximal-dimensional simplices is motivated by
the works of I.\,A.\,Dynnikov and
S.\,P.\,Novikov~\cite{NoDy97}--\cite{DyNo03}. In these papers such
triangulations are very important for the discretization of
differential geometric connections and complex analysis (for
details, see section~\ref{section_colour}).

The automorphism group of~$X$, which we denote by~$\Sym(X)$, has
order~$144$ and is isomorphic to the direct product~~$S_4\times
S_3$, where $S_n$ is the permutation group of an $n$-element set.
In section~\ref{section_explicit} we shall give an explicit
formula for the homeomorphism~$f:|X|\to\CP^2$ such that the
group~$\Sym(X)$ is realized by a group of isometries of the
Fubini-Study metric.

Let us divide the complex projective plane~$\CP^2$ into three
four-dimensional disks
\begin{gather*}
B_1=\{(z_1:z_2:z_3)\mid |z_1|\ge |z_2|\text{ and }|z_1|\ge
|z_3|\};\\
B_2=\{(z_1:z_2:z_3)\mid |z_2|\ge |z_1|\text{ and }|z_2|\ge
|z_3|\};\\
B_3=\{(z_1:z_2:z_3)\mid |z_3|\ge |z_1|\text{ and }|z_3|\ge
|z_2|\}.
\end{gather*}
The intersections $\Pi_1=B_2\cap B_3$, $\Pi_2=B_3\cap B_1$, and
$\Pi_3=B_1\cap B_2$ are solid tori. The intersection $T=B_1\cap
B_2\cap B_3$ is the two-dimensional torus given by the equation
$|z_1|=|z_2|=|z_3|$. The boundary of each disk~$B_j$ is divided by
the torus~$T$ into two solid tori~$\Pi_k$ and~$\Pi_l$, where
$\{j,k,l\}=\{1,2,3\}$. In the paper~\cite{BaKu92} T.\,F.\,Banchoff
and W.\,K\"uhnel introduced the notion of an \textit{equilibrium
triangulation} of~$\CP^2$. According to their definition, a
triangulation of the complex projective plane is said to be
equilibrium if the disks~$B_j$, the solid tori~$\Pi_j$, and the
torus~$T$ are the subcomplexes of this triangulation.
T.\,F.\,Banchoff and W.\,K\"uhnel constructed a series of
equilibrium triangulations of~$\CP^2$ with interesting
automorphism groups. The simplest of these triangulations has 10
vertex and the automorphism group of order~42. The
triangulation~$X$ is equilibrium too, though it does not belong to
the series constructed by T.\,F.\,Banchoff and W.\,K\"uhnel.

For each two-dimensional simplex of~$X$ the number of
four-dimensional simplices containing it is either~$4$ or~$6$. It
is interesting to compare this fact with a result of N.\,Brady,
J.\,McCammond, and J.\,Meier on triangulations of
three-dimensional manifolds. In~\cite{BMM04} they proved that
every closed three-dimensional manifold has a triangulation in
which each edge is contained in exactly $4$, $5$, or $6$
tetrahedra. Besides it is known that every three-dimensional
manifold admitting a triangulation in which each edge is contained
in not more than $5$ tetrahedra can be covered by the
three-dimensional sphere (see~\cite{Mat06},~\cite{Lut06}). Hence
it is interesting to ask which four-dimensional manifolds possess
triangulations such that each two-dimensional simplex is contained
in exactly $4$, $5$, or $6$ four-dimensional simplices. It is also
interesting for which four-dimensional manifolds one can dispense
with one of the three numbers $4$, $5$, and $6$.

Another interesting problem closely related with the problem of
constructing triangulations of manifolds is the problem of
constructing triangulations of mappings of manifolds. Under a
\textit{triangulation} of a mapping~$g:M\to N$ we mean a pair of
triangulations of the manifolds~$M$ and~$N$ with respect to which
the mapping~$g$ is simplicial. (Recall that a mapping of
simplicial complexes is called \textit{simplicial} if it takes
vertices of the first complex to vertices of the second complex
and maps linearly every simplex of the first complex onto some
simplex of the second complex.) For example, K.\,V.\,Madahar and
K.\,S.\,Sarkaria~\cite{MaSa00} constructed a minimal by the number
of vertices triangulation of the Hopf mapping~$S^3\to S^2$. We
shall consider the classical moment mapping~$\mu:\CP^2\to\Delta^2$
given by
\begin{equation}
\label{eq_moment}
\mu(z_1:z_2:z_3)=\frac{\bigl(|z_1|^2,|z_2|^2,|z_3|^2\bigr)}{|z_1|^2+|z_2|^2+|z_3|^2},
\end{equation}
where $\Delta^2\subset\R^3$ is the equilateral triangle with
vertices~$(1,0,0)$, $(0,1,0)$, and~$(0,0,1)$. In
section~\ref{section_moment} we shall use the triangulation~$X$ to
construct a triangulation of the moment mapping~$\mu$. For a
triangulation of the complex projective plane we shall take a
33-vertex $\Sym(X)$-invariant subdivision~$\bX$ of~$X$, for a
triangulation of the triangle~$\Delta^2$ we shall take its
barycentric subdivision.

Like K\"uhnel's triangulation~$K$, the triangulation~$X$ is
related with complex crystallographic groups. This relationship is
obtained in section~\ref{section_crystal}.

\section{Simplicial complex~$X$}
\label{section_constr}

An \textit{abstract simplicial complex} on a vertex set~$V$ is a
set~$K$ of finite subsets of~$V$ such that $\varnothing\in K$ and
for any subsets~$\tau\subset\sigma\subset V$ if $\sigma$ belongs
to~$K$ and~$\tau\subset\sigma$, then~$\tau$ belongs to~$K$ too.
The geometric realization of a simplicial complex~$K$ is denoted
by~$|K|$. A \textit{full subcomplex} of~$K$ spanned by a vertex
set~$W\subset V$ is a simplicial complex consisting of all
simplices~$\sigma\in K$ such that $\sigma\subset W$. The
\textit{link} of a simplex~$\sigma\in K$ is the subcomplex of~$K$
consisting of all simplices~$\tau$ such
that~$\tau\cap\sigma=\varnothing$ and $\tau\cup\sigma\in K$. A
simplicial complex is said to be an $n$-dimensional
\textit{combinatorial sphere} if its geometric realization is
piecewise linearly homeomorphic to the boundary of an
$(n+1)$-dimensional simplex. A simplicial complex is said to be an
$n$-dimensional \textit{combinatorial manifold} if the links of
all its vertices are $(n-1)$-dimensional combinatorial spheres. A
\textit{simplicial mapping} of a simplicial complex~$K_1$ on
vertex set~$V_1$ to a simplicial complex~$K_2$ on vertex set~$V_2$
is a mapping $f:V_1\to V_2$ such that $f(\sigma)\in K_2$ whenever
$\sigma\in K_1$. An \textit{isomorphism} of simplicial complexes
is a simplicial mapping with a simplicial inverse.

\begin{constr}
We construct a $15$-vertex abstract simplicial complex~$X$ in the
following way. For the vertex set of~$X$ we take the $15$-element
set
$$
V=\bigl(V_4\setminus\{e\}\bigr)\sqcup\bigl(\{1,2,3,4\}\times\{1,2,3\}\bigr),
$$
where $V_4\subset S_4$ is the Klein four group. Thus the vertices
of~$X$ are the permutations~$(12)(34)$, $(13)(24)$, and $(14)(23)$
and the pairs of integers~$(a,b)$ such that $1\le a\le 4$ and
$1\le b\le 3$. The four-dimensional simplices of~$X$ are spanned
by the sets
$$
\nu,(1,b_1),(2,b_2),(3,b_3),(4,b_4),\qquad\nu\in
V_4\setminus\{e\}, 1\le b_a\le 3, a=1,2,3,4,
$$
such that $b_{\nu(a)}\ne b_a$ for $a=1,2,3,4$. The simplices of
dimensions less than $4$ are spanned by all subsets of the above
sets.
\end{constr}

The vertices of~$X$ admit a regular colouring in 5 colours. To
obtain such colouring one should paint the vertices $\nu\in
V_4\setminus\{e\}$ in colour~$0$ and the vertices $(a,b)$ in
colour~$a$, $a=1,2,3,4$. It is easy to compute that the $f$-vector
of the triangulation~$X$ is equal to $(15,90,240,270,108)$. Later
we shall prove that~$X$ is a combinatorial manifold. In
sections~\ref{section_explicit} and~\ref{section_crystal} we shall
prove in two different ways that the geometric realization~$|X|$
is piecewise linearly homeomorphic to~$\CP^2$.

Now we compute the automorphism group of~$X$. We define actions of
the groups~$S_4$ and~$S_3$ on the set~$V$ by
\begin{align*}
\theta\cdot\nu &=\theta\nu\theta^{-1},& \theta\cdot (a,b)
&=(\theta(a),b), &\theta\in S_4;\\
\varkappa\cdot\nu &=\nu,& \varkappa\cdot (a,b) &=(a,\varkappa(b)),
&\varkappa\in S_3.
\end{align*}
Obviously, these actions take simplices of~$X$ to simplices
of~$X$. Besides, these actions commute. Hence the group~$S_4\times
S_3$ acts on~$X$ by automorphisms. Obviously, the kernel of this
action is trivial. Therefore the automorphism group~$\Sym(X)$
of~$X$ contains the subgroup isomorphic to~$S_4\times S_3$.
Indeed, this subgroup coincides with the whole group~$\Sym(X)$.

\begin{propos}
The automorphism group $\Sym(X)$ of the simplicial complex~$X$ is
isomorphic to~$S_4\times S_3$.
\end{propos}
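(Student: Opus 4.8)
The plan is to show that any automorphism $\phi$ of $X$ must respect the colour classes, and then to pin down its action on each class using the combinatorial structure of the four-simplices. First I would observe that the five-colouring of the vertices is, up to permutation of the colours, intrinsic: a vertex lies in colour class $0$ (the class $V_4\setminus\{e\}$) if and only if it is the ``$\nu$-vertex'' of some four-simplex, and the colour classes of the vertices $(a,b)$ are the four other parts of the unique partition of $V$ into five independent-in-each-facet blocks. More concretely, each four-simplex meets each colour class in exactly one vertex, so the classes are recovered as the blocks of the equivalence relation ``$v\sim w$ iff no four-simplex contains both $v$ and $w$, or $v=w$''; hence $\phi$ permutes the five classes. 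Next I would distinguish the class $V_4\setminus\{e\}$ of size $3$ from the four classes $\{a\}\times\{1,2,3\}$ of size $3$: although all have size $3$, the class $V_4\setminus\{e\}$ is the unique one all of whose vertices have the \emph{same} link structure forcing $b_{\nu(a)}\ne b_a$, or — more cleanly — one can count, for a vertex $v$, the number of four-simplices through $v$, and check this is different for $\nu$-vertices than for $(a,b)$-vertices (using the $f$-vector $108$ and the $S_4\times S_3$-transitivity on each class). Thus $\phi$ fixes the class $V_4\setminus\{e\}$ setwise and permutes the four classes $\{a\}\times\{1,2,3\}$ among themselves, inducing a permutation $\bar\phi\in S_4$ of the index set $\{1,2,3,4\}$.

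Once this is set up, I would analyse the induced permutation of $V_4\setminus\{e\}$. The facet-condition $b_{\nu(a)}\ne b_a$ encodes, for each $\nu=(a_1a_2)(a_3a_4)$, the partition of $\{1,2,3,4\}$ into the two transposed pairs; so the three elements of $V_4\setminus\{e\}$ are canonically identified with the three ways of splitting $\{1,2,3,4\}$ into two pairs. An automorphism $\phi$ with underlying $\bar\phi\in S_4$ must send the split associated with $\nu$ to the split associated with $\bar\phi\,\nu\,\bar\phi^{-1}$; this forces $\phi|_{V_4\setminus\{e\}}$ to coincide with conjugation by $\bar\phi$, exactly the $S_4$-action defined before the Proposition. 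Dividing $\phi$ by the automorphism coming from $\bar\phi\in S_4$, I may now assume $\phi$ fixes every vertex of $V_4\setminus\{e\}$ and fixes each class $\{a\}\times\{1,2,3\}$ setwise; so $\phi$ acts as some $\varkappa_a\in S_3$ on the $b$-coordinate within class $a$, giving a tuple $(\varkappa_1,\varkappa_2,\varkappa_3,\varkappa_4)\in S_3^4$.

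The remaining and most delicate step is to show this tuple is constant, i.e.\ $\varkappa_1=\varkappa_2=\varkappa_3=\varkappa_4$, whence $\phi$ lies in the $S_3$-subgroup and we are done. Here I would use the facet-condition as a constraint propagated across simplices: fix $\nu=(12)(34)$; a four-simplex through $\nu$ is determined by a tuple $(b_1,b_2,b_3,b_4)$ with $b_1\ne b_2$ and $b_3\ne b_4$, and $\phi$ sends it to $(\varkappa_1(b_1),\varkappa_2(b_2),\varkappa_3(b_3),\varkappa_4(b_4))$, which must again satisfy the condition for $\nu$, i.e.\ $\varkappa_1(b_1)\ne\varkappa_2(b_2)$ whenever $b_1\ne b_2$. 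This says $\varkappa_1(x)\ne\varkappa_2(y)$ for all $x\ne y$ in $\{1,2,3\}$, which (since $\varkappa_1,\varkappa_2$ are bijections of a $3$-set) forces $\varkappa_1=\varkappa_2$; running the same argument with the pair $(3,4)$ gives $\varkappa_3=\varkappa_4$, and using $\nu=(13)(24)$ gives $\varkappa_1=\varkappa_3$. Hence all four agree, $\phi\in S_3$, and therefore $\Sym(X)\subseteq S_4\times S_3$. Combined with the already-established reverse inclusion, $\Sym(X)\cong S_4\times S_3$. The main obstacle is the very first step — arguing rigorously that the five colour classes are canonically determined and that the class $V_4\setminus\{e\}$ is distinguished from the others — since everything afterwards is a short propagation argument; I expect the cleanest route there is the vertex-degree count, which I would carry out explicitly using transitivity of the $S_4\times S_3$-action on each colour class.
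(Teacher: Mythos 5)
The overall architecture of your argument (recover the five colour classes combinatorially, show the class $V_4\setminus\{e\}$ is preserved, force the action on $V_4\setminus\{e\}$ to be conjugation, then propagate the constraint $b_{\nu(a)}\ne b_a$ to show the four permutations $\varkappa_a$ coincide) is sound, and your last two steps are correct as written; note the paper itself only asserts the reverse inclusion without proof, so you are filling a genuine omission. However, the step that you yourself single out as the crux fails in the form you propose. The number of four-dimensional simplices through a vertex does \emph{not} distinguish the two vertex orbits: each of the $108$ top-dimensional simplices contains exactly one vertex of each colour, and every colour class has exactly $3$ vertices, so by transitivity \emph{every} vertex of $X$ lies in exactly $108/3=36$ four-simplices (equivalently $5\cdot108/15=36$). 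Worse, the entire $f$-vector of the vertex link is $(12,48,72,36)$ for both orbits, so no count of faces incident to a vertex can separate $V_4\setminus\{e\}$ from a class $\{a\}\times\{1,2,3\}$; and your fallback phrase about the class whose vertices have ``the same link structure forcing $b_{\nu(a)}\ne b_a$'' is not an argument as it stands. There is also a smaller debt in your first step: to know that ``never contained in a common four-simplex'' is an equivalence relation whose blocks are the colour classes, you must also check that any two vertices from \emph{different} classes do span an edge (the $1$-skeleton is complete $5$-partite); this is true and quick, but it does not follow from regularity of the colouring alone.

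The distinguishing step is repairable with slightly finer data, all of which appears in Section 1 of the paper. The link of $(12)(34)$ is the join of two hexagons, so every edge issuing from a colour-$0$ vertex has link a suspension of a hexagon and lies in exactly $12$ four-simplices; by contrast the edge $\{(1,1),(2,1)\}$ has link the boundary of an octahedron and lies in only $8$ four-simplices. Hence each class $\{a\}\times\{1,2,3\}$ contains vertices incident to an edge of degree $8$, while no vertex of $V_4\setminus\{e\}$ is, so no automorphism can carry one class to the other. (An alternative invariant: the six empty triangles through $\nu$ --- missing $2$-faces all of whose edges are present, namely $\{\nu,(a,b),(\nu(a),b)\}$ --- involve only the two class-pairs of the splitting determined by $\nu$, whereas the six empty triangles through a vertex $(a,b)$ involve six distinct class-pairs.) The same empty-triangle description is also the cleanest way to make your third step rigorous, since it attaches the splitting of $\{1,2,3,4\}$ to $\nu$ purely combinatorially. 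With these repairs your propagation argument goes through and yields $\Sym(X)\cong S_4\times S_3$.
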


The vertices of~$X$ are divided into two $\Sym(X)$-orbits. The
first orbit consists of the $3$ vertices $(12)(34)$, $(13)(24)$,
and $(14)(23)$. The second orbit consists of $12$
vertices~$(a,b)$.

\begin{propos}
The simplicial complex~$X$ is a four-dimensional combinatorial
manifold.
\end{propos}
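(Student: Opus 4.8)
By the definition recalled above it suffices to show that the link of every vertex of~$X$ is a three-dimensional combinatorial sphere; since the four-simplices of the construction are the maximal simplices of~$X$ and every simplex is a face of one of them, $X$ is pure of dimension~$4$, so these links are indeed three-dimensional. By the preceding proposition the group $\Sym(X)\cong S_4\times S_3$ acts on~$X$, and, as noted, its action on the vertices has exactly two orbits: $\{(12)(34),(13)(24),(14)(23)\}$ and the orbit of the twelve vertices~$(a,b)$. An automorphism maps a vertex link isomorphically onto the link of its image, so it is enough to analyse $\Lk\bigl((12)(34)\bigr)$ and $\Lk\bigl((4,3)\bigr)$.

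For $\nu=(12)(34)$ the construction shows that the four-simplices through~$\nu$ are exactly the sets $\{\nu,(1,b_1),(2,b_2),(3,b_3),(4,b_4)\}$ with $b_1\ne b_2$ and $b_3\ne b_4$ (since the four conditions $b_{\nu(a)}\ne b_a$ collapse to these two), so $\Lk(\nu)=G_{12}*G_{34}$, where $G_{12}$ is the graph on the six vertices $(1,b),(2,b)$, $b\in\{1,2,3\}$, with edges the pairs $\{(1,b_1),(2,b_2)\}$, $b_1\ne b_2$, and $G_{34}$ is the analogous graph on $(3,b),(4,b)$. Each of $G_{12},G_{34}$ is $K_{3,3}$ with a perfect matching removed, hence a $2$-regular bipartite graph, hence a single hexagon, i.e.\ a combinatorial $1$-sphere. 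Since the join of two combinatorial spheres is a combinatorial sphere, $\Lk(\nu)$ is a combinatorial $3$-sphere.

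For $w=(4,3)$, going through the three values of $\nu\in V_4\setminus\{e\}$ and setting $b_4=3$ one finds that the four-simplices through~$w$ are the sets $\{\nu,(1,b_1),(2,b_2),(3,b_3),(4,3)\}$ with: $\nu=(12)(34)$, $b_1\ne b_2$, $b_3\in\{1,2\}$; or $\nu=(13)(24)$, $b_1\ne b_3$, $b_2\in\{1,2\}$; or $\nu=(14)(23)$, $b_1\in\{1,2\}$, $b_2\ne b_3$. Thus $\Lk(w)$ has $12$ vertices and $36$ tetrahedra in three groups of twelve, and the tetrahedra of each group, with their faces, form the cone from the corresponding vertex~$\nu$ over a combinatorial $2$-sphere; e.g.\ the first group yields the $3$-ball $D_1=(12)(34)*\bigl(C*\{(3,1),(3,2)\}\bigr)$, where $C$ is the hexagon on the vertices $(1,b_1),(2,b_2)$ with $b_1\ne b_2$, so that $C*\{(3,1),(3,2)\}$ — a hexagon joined with two points — is a combinatorial $2$-sphere. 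Writing $\Lk(w)=D_1\cup D_2\cup D_3$ accordingly, I would verify by listing simplices — using that the stabilizer of~$w$ in $\Sym(X)$, a group of order~$12$, permutes $D_1,D_2,D_3$ cyclically, to cut the work down — that $D_i\cap D_j$ is a $2$-disk contained in $\partial D_i\cap\partial D_j$ for $i\ne j$ and that $D_1\cap D_2\cap D_3$ is a single hexagon which is the boundary of each of these three disks. Granting this, $D_1\cup D_2$ is a combinatorial $3$-ball (two $3$-balls glued along a boundary $2$-disk), $(D_1\cup D_2)\cap D_3=(D_1\cap D_3)\cup(D_2\cap D_3)$ is a combinatorial $2$-sphere (two $2$-disks glued along their common boundary hexagon) equal both to $\partial(D_1\cup D_2)$ and to $\partial D_3$, and hence $\Lk(w)$, the union of two combinatorial $3$-balls along their whole common boundary sphere, is a combinatorial $3$-sphere.

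The orbit reduction, the analysis of the first link, and the overall structure of the second are straightforward; the main obstacle is the finite but fiddly combinatorial bookkeeping that pins down the balls $D_i$, their pairwise intersections, and their triple intersection, and checks the stated properties. If one prefers to bypass the gluing argument, one can instead verify directly that $\Lk(w)$ is a simply-connected combinatorial $3$-manifold — its vertex links being small explicit triangulated $2$-spheres, e.g.\ octahedra, and every triangulated $2$-sphere being automatically combinatorial — and then invoke the uniqueness of the PL structure on $S^3$, or else exhibit a short sequence of bistellar moves carrying $\Lk(w)$ to $\partial\Delta^4$.
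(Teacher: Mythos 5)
Your proposal is correct, and for the harder half it takes a genuinely different route from the paper. The orbit reduction and the treatment of the link of $(12)(34)$ (join of two hexagons) coincide with the paper's argument. For a vertex in the $12$-element orbit, however, the paper works with the link $\N$ of $(1,1)$, deletes the three vertices $(2,1),(2,2),(2,3)$, realizes the remaining full subcomplex $J$ (a union of three $2$-disks) explicitly inside $S^3$ by means of a figure, observes that its complement consists of three open $3$-balls, and recovers $\N$ by coning the boundaries of these balls from the deleted vertices; the verification is essentially picture-based. You instead decompose the link of $(4,3)$ into the three closed stars of the vertices $\nu\in V_4\setminus\{e\}$ --- each the cone over the link of the edge $\{\nu,(4,3)\}$, i.e.\ over the suspension of a hexagon, hence a combinatorial $3$-ball --- and then apply standard PL gluing lemmas after checking that the pairwise intersections are $2$-disks sharing the triple intersection as common boundary hexagon. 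Your decomposition is more symmetric (the order-$12$ stabilizer of $(4,3)$ permutes the three balls, which are pairwise isomorphic, whereas the paper's three balls are cones over non-isomorphic $2$-spheres), and it replaces the geometric picture by purely combinatorial bookkeeping; the checks you defer do in fact hold (for instance $D_1\cap D_2$ consists of exactly six triangles forming a disk bounded by the hexagon on $(1,1),(2,2),(3,1),(1,2),(2,1),(3,2)$, which is also the triple intersection and equals $\partial D_3=\partial(D_1\cup D_2)$), and the amount of detail left to inspection is comparable to the paper's reliance on its figure. One small remark: your alternative ending via ``simply-connected combinatorial $3$-manifold plus uniqueness of the PL structure on $S^3$'' silently invokes the Poincar\'e conjecture and is much heavier than needed; the gluing argument you give as the main line is the right one.
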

\begin{proof}
We need to prove that the links of all vertices of~$X$ are
three-dimensional combinatorial spheres. Obviously, the links of
all vertices in the same $\Sym(X)$-orbit are pairwise isomorphic.
Thus we suffice to prove that the links of the vertices~$(12)(34)$
and~$(1,1)$ are three-dimensional combinatorial spheres.

The link of the vertex~$(12)(34)$ consists of the simplices
$$
(1,b_1),(2,b_2),(3,b_3),(4,b_4)
$$
such that $b_1\ne b_2$ и $b_3\ne b_4$. Hence it is isomorphic to
the join of two closed $6$-vertex circles shown in
Fig.~\ref{fig_link(12)(34)}.
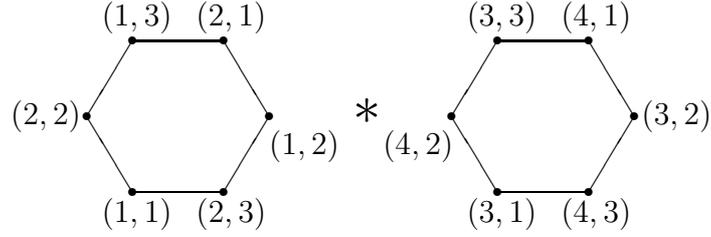
\begin{figure}
\unitlength=0.2cm
\begin{picture}(48,14)
\multiput(12,7)(24,0){2}{\begin{picture}(0,0)

\put(-6,0){\circle*{0.5}}

\put(6,0){\circle*{0.5}}

\put(-3,5){\circle*{0.5}}

\put(-3,-5){\circle*{0.5}}

\put(3,5){\circle*{0.5}}

\put(3,-5){\circle*{0.5}}

\put(-3,5){\line(1,0){6}}

\put(-3,-5){\line(1,0){6}}

\put(-3,5){\line(-3,-5){3}}

\put(-3,-5){\line(-3,5){3}}

\put(3,5){\line(3,-5){3}}

\put(3,-5){\line(3,5){3}}

\end{picture}}

{\LARGE\put(23.5,6.5){$*$}}

\put(7,0){$(1,1)$}

\put(13.2,0){$(2,3)$}

\put(7,13){$(1,3)$}

\put(13.2,13){$(2,1)$}

\put(1,6.5){$(2,2)$}

\put(18,4.5){$(1,2)$}

\put(31,0){$(3,1)$}

\put(37.2,0){$(4,3)$}

\put(31,13){$(3,3)$}

\put(37.2,13){$(4,1)$}

\put(25.5,4.5){$(4,2)$}

\put(42.5,6.5){$(3,2)$}

\end{picture}
\caption{The link of the vertex $(12)(34)$}
\label{fig_link(12)(34)}
\end{figure}
\begin{figure}
\unitlength=5mm
\begin{picture}(22,16)
\put(4,8){\circle*{0.2}}

\put(7,8){\circle*{0.2}}

\put(15,8){\circle*{0.2}}

\put(18,8){\circle*{0.2}}

\put(9,4){\circle*{0.2}}

\put(9,12){\circle*{0.2}}

\put(13,4){\circle*{0.2}}

\put(13,12){\circle*{0.2}}

\put(0,8){\line(1,0){7}}

\put(15,8){\line(1,0){7}}

\put(4,8){\line(5,4){5}}

\put(4,8){\line(5,-4){5}}

\put(7,8){\line(3,-2){6}}

\put(7,16){\line(1,-2){8}}

\put(7,0){\line(1,2){2}}

\put(13,12){\line(1,2){2}}

\put(9,12){\line(3,-2){6}}

\put(13,4){\line(5,4){5}}

\put(13,12){\line(5,-4){5}}

\multiput(7.15,8.1)(1.2,0.8){5}{\line(3,2){0.8}}

\multiput(9.15,4.1)(1.2,0.8){5}{\line(3,2){0.8}}

\multiput(9.1,4.2)(0.6,1.2){8}{\line(1,2){0.4}}

\footnotesize

\put(2.5,7.1){$(3,1)$}

\put(4.9,7.3){\scriptsize$(14)(23)$}

\put(18,7.1){$(4,1)$}

\put(14.8,7.3){\scriptsize$(13)(24)$}

\put(7,12){$(4,2)$}

\put(13.5,12){$(3,3)$}

\put(7,3.7){$(4,3)$}

\put(13.5,3.7){$(3,2)$}

\thicklines

\put(7,8){\line(1,2){2}}

\put(7,8){\line(1,-2){2}}

\put(15,8){\line(-1,2){2}}

\put(15,8){\line(-1,-2){2}}

\put(9,4){\line(1,0){4}}

\put(9,12){\line(1,0){4}}

\end{picture}
\caption{The subcomplex $J$ of the link of the vertex $(1,1)$}
\label{fig_J}
\end{figure}
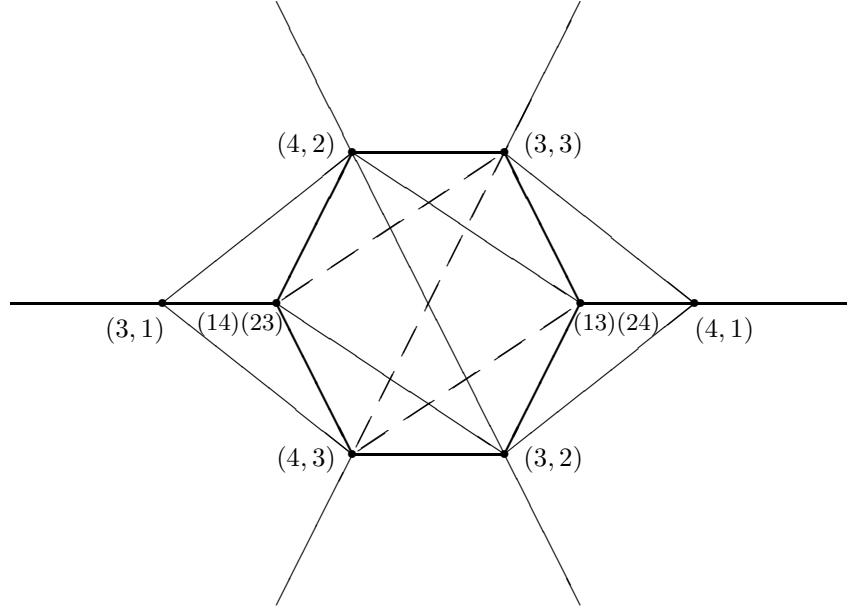

The link of the vertex~$(1,1)$ is more complicated. We denote it
by~$\N$. First, let us consider the full subcomplex~$J$ of~$\N$
spanned by the set of all vertices of~$\N$ except for the
vertices~$(2,1)$,~$(2,2)$, and~$(2,3)$. This complex can be
realized in a three-dimensional sphere as it is shown in
Fig~\ref{fig_J}. (Here the three-dimensional sphere is identified
with the one-point compactification of~$\R^3$.) We imply that the
vertex~$(12)(34)$ is placed at the infinitely remote point, the
vertices~~$(3,2)$ and~$(4,2)$ are somewhat ``lifted'' above the
plane of the figure and the vertices~$(3,3)$ and~$(4,3)$ are
somewhat ``lowered'' below the plane of the figure. The
complex~$J$ is the union of three two-dimensional disks. The first
disk consists of the triangles
\begin{align*}
&(12)(34),(3,1),(4,2);&&(12)(34),(3,3),(4,2);\\
&(12)(34),(3,1),(4,3);&&(13)(24),(3,2),(4,1);\\
&(12)(34),(3,2),(4,3);&&(13)(24),(3,3),(4,1);\\
&(12)(34),(3,2),(4,1);&&(14)(23),(3,1),(4,2);\\
&(12)(34),(3,3),(4,1);&&(14)(23),(3,1),(4,3).
\end{align*}
The second disk consists of the triangles
\begin{align*}
&(13)(24),(3,3),(4,2);&&(14)(23),(3,2),(4,2);\\
&(13)(24),(3,2),(4,2);&&(14)(23),(3,2),(4,3).
\end{align*}
The third disk consists of the triangles
\begin{align*}
&(13)(24),(3,2),(4,3);&&(14)(23),(3,3),(4,3);\\
&(13)(24),(3,3),(4,3);&&(14)(23),(3,3),(4,2).
\end{align*}
The complement of~$J$ in the three-dimensional sphere is the union
of three open three-dimensional disks. The first disk~$D_1$ is the
interior of the octahedron with vertices~$(13)(24)$, $(14)(23)$,
$(3,2)$, $(3,3)$, $(4,2)$,~$(4,3)$, the second disk~$D_2$ is
situated ``below the plane of the figure'', and the third
disk~$D_3$ is situated ``above the plane of the figure''. We place
the vertices~$(2,1)$, $(2,2)$, and~$(2,3)$ in the interiors of the
disks~$D_1$,~$D_2$, and~$D_3$ respectively and triangulate the
disks~$D_1$,~$D_2$, and~$D_3$ as cones over their boundaries with
vertices~$(2,1)$, $(2,2)$, and~$(2,3)$ respectively. The obtained
triangulation of the three-dimensional sphere is isomorphic to the
simplicial complex~$\N$.
\end{proof}

Let us now describe the links of edges and two-dimensional
simplices of~$X$. The edges of~$X$ are divided into three
$\Sym(X)$-orbits whose representatives are the edge with vertices
$(1,1)$ and $(2,1)$, the edge with vertices $(12)(34)$ and
$(1,1)$, and the edge with vertices $(1,1)$ and $(2,2)$
respectively. The first orbit consists of $18$ edges, either of
the second and the third orbits consists of~$36$ edges. The link
of an edge in the first orbit is isomorphic to the boundary of an
octahedron. The link of an edge in the second orbit is isomorphic
to the suspension over the boundary of a hexagon. The link of an
edge in the third orbit is isomorphic to the triangulation of a
two-dimensional sphere that can be obtained by gluing together
two examples of the triangulation shown in Fig.~\ref{fig_linkedge}
along their boundaries.

\begin{figure}
\unitlength=4mm
\begin{picture}(12,10)
\put(0,0){\circle*{0.2}}

\put(12,0){\circle*{0.2}}

\put(6,10){\circle*{0.2}}

\put(4,4){\circle*{0.2}}

\put(8,4){\circle*{0.2}}

\put(6,1.5){\circle*{0.2}}

\put(0,0){\line(1,0){12}}

\put(0,0){\line(3,5){6}}

\put(0,0){\line(4,1){6}}

\put(0,0){\line(1,1){4}}

\put(4,4){\line(4,-5){2}}

\put(8,4){\line(-4,-5){2}}

\put(4,4){\line(1,0){4}}

\put(4,4){\line(1,3){2}}

\put(8,4){\line(-1,3){2}}

\put(6,10){\line(3,-5){6}}

\put(12,0){\line(-1,1){4}}

\put(12,0){\line(-4,1){6}}

\put(0,0){\line(1,0){12}}

\put(0,0){\line(1,0){12}}

\put(0,0){\line(1,0){12}}

\put(0,0){\line(1,0){12}}

\end{picture}

\caption{Half the link of an edge of~$X$ in the third
$\Sym(X)$-orbit} \label{fig_linkedge}
\end{figure}
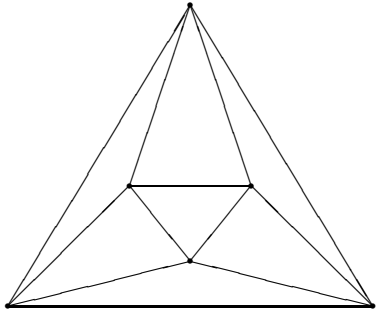

The link of a codimension~$2$ simplex of a combinatorial manifold
is always isomorphic to the boundary of a polygon and is
completely characterized by the number of vertices of this
polygon, which is equal to the number of maximal-dimensional
simplices containing this codimension~$2$ simplex. The
two-dimensional simplices of~$X$ are divided into five
$\Sym(X)$-orbits consisting of $36$, $36$, $72$, $72$, and $24$
simplices respectively with representatives
$\{(12)(34),(1,1),(2,2)\}$, $\{(12)(34),(1,1),(3,1)\}$,
$\{(12)(34),(1,1),(3,2)\}$, $\{(1,1),(2,1),(3,2)\}$, and
$\{(1,1),(2,2),(3,3)\}$ respectively. The links of simplices in
the first orbit and in the fifth orbit are isomorphic to the
boundary of a hexagon, the links of simplices in the second orbit,
the third orbit, and in the fourth orbit are isomorphic to the
boundary of a quadrangle. Thus every two-dimensional simplex
of~$X$ is contained either in $4$ or in $6$ four-dimensional
simplices.

Let us now describe a $33$-vertex $\Sym(X)$-invariant
subdivision~$\bX$ of~$X$, which will be needed in
section~\ref{section_moment} to construct a triangulation of the
moment mapping. As it has been mentioned before the set of edges
of~$X$ is divided into three $\Sym(X)$-orbits. Consider the orbit
consisting of $18$ edges~$(a_1,b)$,~$(a_2,b)$, $a_1\ne a_2$. At
the midpoint of every such edge we introduce a new vertex denoted
by~$(\widehat{a_1a_2},b)$. (Notations~$(\widehat{a_1a_2},b)$
and~$(\widehat{a_2a_1},b)$ correspond to the same vertex.) The set
of four-dimensional simplices of~$X$ is divided into two
$\Sym(X)$-orbits. The first orbit consists of $72$ simplices
\begin{equation}
\label{eq_s1}
(a_1a_2)(a_3a_4),(a_1,b_1),(a_2,b_2),(a_3,b_1),(a_4,b_3),
\end{equation}
where the numbers $a_1$, $a_2$, $a_3$, and $a_4$ are pairwise
distinct and the numbers $b_1$, $b_2$, and $b_3$ are pairwise
distinct. The second orbit consists of $36$ simplices
\begin{equation}
\label{eq_s2}
(a_1a_2)(a_3a_4),(a_1,b_1),(a_2,b_2),(a_3,b_1),(a_4,b_2),
\end{equation}
where the numbers $a_1$, $a_2$, $a_3$, and $a_4$ are pairwise
distinct and $b_1\ne b_2$. We divide every four-dimensional
simplex~(\ref{eq_s1}) into $2$ four-dimensional simplices
\begin{equation}
\label{eq_bX1}
\begin{aligned}
(a_1a_2)(a_3a_4),(\widehat{a_1a_3},b_1),(a_1,b_1),(a_2,b_2),(a_4,b_3),\\
(a_1a_2)(a_3a_4),(\widehat{a_1a_3},b_1),(a_3,b_1),(a_2,b_2),(a_4,b_3)
\end{aligned}
\end{equation}
and divide every four-dimensional simplex~(\ref{eq_s2}) into $4$
four-dimensional simplices
\begin{equation}
\label{eq_bX2}
\begin{aligned}
(a_1a_2)(a_3a_4),(\widehat{a_1a_3},b_1),(\widehat{a_2a_4},b_2)(a_1,b_1),(a_2,b_2),\\
(a_1a_2)(a_3a_4),(\widehat{a_1a_3},b_1),(\widehat{a_2a_4},b_2)(a_1,b_1),(a_4,b_2),\\
(a_1a_2)(a_3a_4),(\widehat{a_1a_3},b_1),(\widehat{a_2a_4},b_2)(a_3,b_1),(a_2,b_2),\\
(a_1a_2)(a_3a_4),(\widehat{a_1a_3},b_1),(\widehat{a_2a_4},b_2)(a_3,b_1),(a_4,b_2).
\end{aligned}
\end{equation}
As a result we obtain a $33$-vertex $\Sym(X)$-invariant
subdivision~$\bX$ of~$X$ with $288$ four-dimensional simplices. It
can be easily checked that the four-dimensional simplices of~$X$
are divided into two $\Sym(X)$-orbits. The first orbit consists
of~$144$ simplices~(\ref{eq_bX1}) and the second orbit consists
of~$144$ simplices~(\ref{eq_bX2}).

\section{Minimal triangulations and chess colourings of simplices}
\label{section_colour} Let $M$ be a closed $n$-dimensional
manifold.

\begin{defin}
A \textit{triangulation} of a manifold $M$ is a simplicial
complex~$K$ together with a homeomorphism~$\varphi:|K|\to M$. A
triangulation is said to be \textit{combinatorial} or
\textit{piecewise linear} if~$K$ is a combinatorial manifold.
\end{defin}

In this paper we shall deal only with manifolds of dimension not
greater than~$4$. It is well known that in this case every
triangulation is combinatorial. We shall often not care about a
concrete homeomorphism~$\varphi$ and say that a simplicial
complex~$K$ is a triangulation of~$M$ if $|K|\approx M$.

By~$\T(M)$ we denote the set of all triangulations of~$M$ up to an
isomorphism. The set~$\T(M)$ contains the following interesting
subsets.

1. $\T_{even}(M)\subset\T(M)$ is the subset consisting of all
triangulations~$K$ such that every $(n-2)$-dimensional simplex
of~$K$ is contained in even number of $n$-dimensional simplices.

2. $\T_{bw}(M)\subset \T(M)$ is the subset consisting of all
triangulations admitting a chess colouring of $n$-dimensional
simplices, that is, a colouring in black and white colours such
that any two simplices possessing a common facet have distinct
colours.

3. $\T_{colour}(M)\subset \T(M)$ is the subset consisting of all
triangulations~$K$ admitting a regular colouring of vertices, that
is, a colouring of vertices in colours of some $(n+1)$-element set
such that every $n$-dimensional simplex of~$K$ contains exactly
one vertex of each colour. (Usually we denote the colours by the
numbers $0,1,\ldots,n$.)

The classes of triangulations~$\T_{even}(M)$, $\T_{bw}(M)$,
and~$\T_{colour}(M)$ are very important in theory of
discretization of differential geometric connections and complex
analysis developed in works of I.\,A.\,Dynnikov and
S.\,P.\,Novikov~\cite{NoDy97}--\cite{DyNo03}. In these papers an
\textit{operator of discrete connection} on a triangulation~$K$ of
a manifold~$M$ is defined to be an operator that takes a function
on the vertex set of~$K$ to a function on the set of
$n$-dimensional simplices of~$K$ by
\begin{equation}
\label{eq_DC}
(Q\psi)_{\sigma}=\sum_{v\in\sigma}b_{\sigma,v}\psi_v,
\end{equation}
where~$b_{\sigma,v}$ is a fixed set of coefficients. A discrete
connection is called \textit{canonical} if all
coefficients~$b_{\sigma,v}$ are equal to~$1$. We shall always deal
with canonical discrete connections only. A \textit{fat path} in a
triangulation~$K$ is a sequence of $n$-dimensional simplices such
that any two consecutive simplices have a common facet. Solving
equation~(\ref{eq_DC}) along the fat path corresponding to the
circuite around an $(n-2)$-dimensional simplex~$\tau$, we obtain
the \textit{local holonomy} or the \textit{curvature} of a
discrete connection at the simplex~$\tau$. For the canonical
discrete connection the local holonomy at simplex~$\tau$ is
trivial if and only if the simplex~$\tau$ is contained in even
number of $n$-dimensional simplices. Hence $\T_{even}(M)$ is
exactly the class of all triangulations whose canonical discrete
connections have zero curvature. If the local holonomy is trivial,
then the \textit{global holonomy} homomorphism $\rho:\pi_1(M)\to
S_{n+1}$ is well defined. It is easy to see that
$\T_{colour}(M)\subset\T_{even}(M)$ is exactly the subclass
consisting of all triangulations whose canonical discrete
connections have trivial global holonomy.

Following~\cite{DyNo03} we denote by $\rho_1$ the composite
homomorphism
$$
\pi_1\xrightarrow{\rho}S_{n+1}\xrightarrow{\sgn}\Z_2.
$$
Two other homomorphisms considered in~\cite{DyNo03} are the
orientation homomorphism~$\rho_2:\pi_1(M)\to\Z_2$ and the
homomorphism~$\rho_3:\pi_1(M)\to\Z_2$ that takes each homotopy
class to the parity of the number of $n$-dimensional simplices in
a fat path representing this homotopy class. (The
homomorphism~$\rho_3$ is well defined if the triangulation belongs
to~$\T_{even}(M)$.) Obviously, the
subclass~$\T_{bw}(M)\subset\T_{even}(M)$ consists of all
triangulations for which the homomorphism~$\rho_3$ is trivial. As
it was mentioned in~\cite{DyNo03}, the homomorphisms
~$\rho_1,\rho_2,\rho_3$ satisfy the
relation~$\rho_1\rho_2=\rho_3$, where we use the multiplicative
notation for the group~$\Z_2$. It easily follows from this
relation that $\T_{colour}(M)\subset\T_{bw}(M)$ if the
manifold~$M$ is orientable and
$\T_{colour}(M)\cap\T_{bw}(M)=\varnothing$ if the manifold~$M$ is
non-orientable.

Notice that the holonomy group of the canonical discrete
connection coincides with the \textit{projectivity group} of the
triangulation, which was introduced by
M.\,Joswig~\cite{Jos01},~\cite{Jos02}. Hence in M.\,Joswig's
terminology the class~$\T_{colour}(M)$ is exactly the class of all
triangulations with trivial projectivity groups.

It follows from the above reasoning that
$\T_{colour}(M)=\T_{bw}(M)=\T_{even}(M)$ if $M$ is
simply-connected. Moreover, $\T_{bw}(M)=\T_{even}(M)$ if
$\pi_1(M)$ has no non-trivial homomorphisms to~$\Z_2$ and
$\T_{colour}(M)=\T_{even}(M)$ if $\pi_1(M)$ has no non-trivial
homomorphisms to~$S_{n+1}$.

Triangulations  with regular colourings of vertices appear as well
in many other problems. For example, in the paper~\cite{DaJa91} by
M.\,W.\,Davis and T.\,Januszkiewicz such triangulations lead to an
important class of manifolds called \textit{small coverings
induced from a linear model over simple polytopes} and in the
author's paper~\cite{Gai08} such triangulations are important for
a construction of combinatorial realization of cycles.

Two-dimensional triangulations with chess colourings of triangles
are very important for the discretization of complex
analysis~\cite{DyNo03}.

A \textit{minimal} triangulation of a manifold~$M$ is a
triangulation of~$M$ with the smallest possible number of
vertices. The problem of finding minimal triangulations of
manifolds is a very interesting and hard problem. It is solved
only for a few manifolds. A good survey of results on minimal
triangulations is the paper~~\cite{Lut05} by F.\,Lutz. For the
complex projective plane a minimal triangulation has 9 vertex and
is unique up to an isomorphism. It was constructed by W.\,K\"uhnel
in 1980 (see~\cite{KuBa83},~\cite{KuLa83}). In the present paper
we are interested in the problems of finding minimal
triangulations of manifolds in the classes~$\T_{even}$,~$\T_{bw}$,
and~$\T_{colour}$. For simply-connected manifolds these three
problems coincide.

\begin{ex}
Let~$e_0,e_1,\ldots,e_n$ be the standard basis in~$\R^{n+1}$. The
convex hull of the points~$\pm e_0,\pm e_1,\ldots,\pm e_n$ is
called a \textit{cross-polytope}. It is the regular polytope dual
to the $(n+1)$-dimensional cube. The boundary of the
cross-polytope is a triangulation of the $n$-dimensional sphere.
This triangulation admits a regular colouring of vertices. To
obtain such colouring one should paint the vertices~$\pm e_j$ in
colour~$j$. Obviously, this triangulation is minimal in the
class~$\T_{even}(S^n)=\T_{bw}(S^n)=\T_{colour}(S^n)$. Indeed, if a
triangulation of an $n$-dimensional manifold admits a regular
colouring of vertices, then this triangulation should contain at
least $2$ vertices of each colour and, hence, at least~$2n+2$
vertices.
\end{ex}

We have the following simple proposition.

\begin{propos}
\label{propos_3n+3} If a combinatorial triangulation of an
$n$-dimensional manifold admits a regular colouring of vertices
in~$n+1$ colours and contains less than~$3n+3$ vertices, then this
manifold is piecewise linearly homeomorphic to an $n$-dimensional
sphere.
\end{propos}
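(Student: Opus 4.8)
The plan is to show that such a tightly-coloured complex is forced to be a suspension. First I would record two standard consequences of $K$ being a combinatorial manifold: $K$ is \emph{pure} (every simplex is a face of an $n$-simplex) and every $(n-1)$-simplex is a face of exactly two $n$-simplices. These imply that each of the $n+1$ colour classes has at least two vertices: starting from an $n$-simplex $\sigma$ with colour-$i$ vertex $v_i$, the facet $\sigma\setminus\{v_i\}$ lies in a second $n$-simplex, whose colour-$i$ vertex must differ from $v_i$. Since $K$ has fewer than $3(n+1)$ vertices, by the pigeonhole principle some colour class, say colour $0$, consists of exactly two vertices $a$ and $b$. Note that $a$ and $b$ never lie in a common simplex, since such a simplex would be a face of an $n$-simplex with two vertices of colour $0$.

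Next, every $n$-simplex of $K$ contains exactly one of $a$, $b$ (its unique colour-$0$ vertex), so together with purity this yields $K=\St(a)\cup\St(b)$, where $\St(v)=\{v\}*\Lk(v)$ is the closed star. As $K$ is a combinatorial manifold, $\Lk(a)$ and $\Lk(b)$ are combinatorial $(n-1)$-spheres, hence $\St(a)$ and $\St(b)$ are combinatorial $n$-balls whose boundaries are $\Lk(a)$ and $\Lk(b)$. The crucial step is then to prove $\Lk(a)=\Lk(b)$, for which I would use that $M$, and hence $|K|$, is a \emph{closed} manifold: if a point $x\in|\Lk(a)|$ were not in the closed set $|\St(b)|$, it would have a neighbourhood in $|K|=|\St(a)|\cup|\St(b)|$ contained in $|\St(a)|$, yet $x$ lies on the boundary of the $n$-ball $|\St(a)|$, so $|K|$ would have non-empty boundary at $x$, a contradiction. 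Thus $\Lk(a)\subseteq\St(b)$ as subcomplexes, and since no simplex of $\Lk(a)$ contains $b$, in fact $\Lk(a)\subseteq\Lk(b)$; by symmetry $\Lk(a)=\Lk(b)=:L$. Therefore $K=(\{a\}*L)\cup(\{b\}*L)=\{a,b\}*L$ is the simplicial suspension of the combinatorial $(n-1)$-sphere $L$, which is a combinatorial $n$-sphere, so $|K|\approx M$ is piecewise linearly homeomorphic to $S^n$.

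I expect the main obstacle to be exactly that crucial step: a priori the two $n$-balls $\St(a)$ and $\St(b)$ could overlap in a complicated way, and ruling this out (so that $K$ becomes an honest suspension) is precisely where the absence of boundary of $M$ must be used. The remaining ingredients are elementary: the combinatorics of the regular colouring, the fact that closed stars in a combinatorial manifold are combinatorial balls, and the fact that the suspension of a combinatorial sphere is again a combinatorial sphere.
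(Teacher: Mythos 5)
Your proposal is correct and takes essentially the same approach as the paper: a pigeonhole argument produces a colour class with exactly two vertices, the triangulation is then recognized as the suspension over the common link of these two vertices, which is a combinatorial $(n-1)$-sphere, so the whole complex is a combinatorial $n$-sphere. The only difference is one of detail: you carefully prove the suspension claim ($\Lk(a)=\Lk(b)$, using that the manifold is closed), a step the paper's proof simply asserts.
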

\begin{proof}
If there is a colour with only two vertices $v_1$ and $v_2$
coloured by it, then the triangulation is the suspension with
vertices $v_1$ and $v_2$ over the full subcomplex spanned by all
other vertices of the triangulation. A combinatorial manifold that
is a suspension is piecewise linearly homeomorphic to a sphere.
\end{proof}

\begin{ex}
It is well known that a minimal triangulation of the
two-dimensional torus is unique up to an isomorphism and has~$7$
vertices. The most symmetric realization of this triangulation is
shown in Fig.~\ref{fig_T2},\textit{a}. (We imply the
identification of the opposite sides of the hexagon.) One can
easily check that this triangulation belongs to the
classes~$\T_{even}(T^2)$ and~$\T_{bw}(T^2)$ and does not belong to
the class~$\T_{colour}(T^2)$. Proposition~\ref{propos_3n+3}
implies that a minimal triangulation in the
class~$\T_{colour}(T^2)$ has at least~$9$ vertices. Actually such
triangulation has $9$ vertices and is unique up to an isomorphism.
This triangulation is shown in Fig.~\ref{fig_T2},\textit{b}.
\end{ex}
\begin{ex}
A minimal triangulation of the real projective plane~$\RP^2$ is
unique, has~$6$ vertices, and can be obtained from the boundary of
a regular icosahedron by identifying every pair of antipodal
points. This triangulation belongs to none of the classes
~$\T_{even}(\RP^2)$,~$\T_{bw}(\RP^2)$, and~$\T_{colour}(\RP^2)$. A
minimal triangulation in the classes~$\T_{even}(\RP^2)$
and~$\T_{bw}(\RP^2)$ is unique and has $7$ vertices (see
Fig.~\ref{fig_RP2},\textit{a}). A minimal triangulation in the
class~~$\T_{colour}(\RP^2)$ is also unique and has $9$ vertices
(see Fig.~\ref{fig_RP2},\textit{b}).
\end{ex}

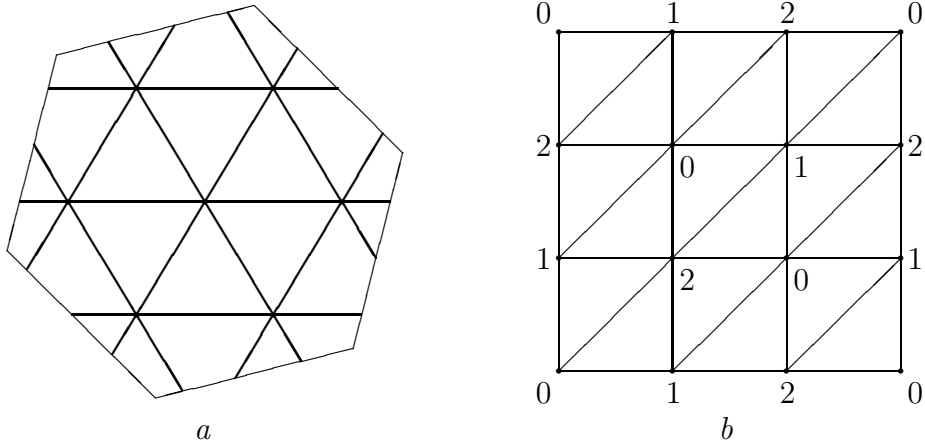
\begin{figure}
\unitlength=3mm
\begin{picture}(38,19.2)

\put(7,0){\textit{a}}

\put(30,0){\textit{b}}

\put(7.5,10.5){\begin{picture}(0,0)

\put(0,0){\circle*{0.2}}

\put(-6,0){\circle*{0.2}}

\put(-3,5){\circle*{0.2}}

\put(-3,-5){\circle*{0.2}}

\put(3,-5){\circle*{0.2}}

\put(3,5){\circle*{0.2}}

\put(6,0){\circle*{0.2}}

\put(-6.5,6.5){\line(-1,-4){2.167}}

\put(6.5,-6.5){\line(1,4){2.167}}

\put(-8.667,-2.167){\line(1,-1){6.5}}

\put(8.667,2.167){\line(-1,1){6.5}}

\put(-6.5,6.5){\line(4,1){8.667}}

\put(6.5,-6.5){\line(-4,-1){8.667}}

\thicklines

\put(-8.13,0){\line(1,0){16.26}}

\put(0,0){\line(3,5){4.05}}

\put(0,0){\line(-3,-5){4.05}}

\put(0,0){\line(3,-5){4.23}}

\put(0,0){\line(-3,5){4.23}}

\put(-6.88,5){\line(1,0){12.72}}

\put(6.88,-5){\line(-1,0){12.72}}

\put(-6,0){\line(3,5){4.67}}

\put(6,0){\line(-3,-5){4.67}}

\put(-6,0){\line(3,-5){5.01}}

\put(6,0){\line(-3,5){5.01}}

\put(-6,0){\line(-3,-5){1.8}}

\put(-6,0){\line(-3,5){1.5}}

\put(6,0){\line(3,5){1.8}}

\put(6,0){\line(3,-5){1.5}}

\end{picture}}

\put(23,3){\begin{picture}(0,0)

\multiput(0,0)(5,0){4}{\multiput(0,0)(0,5){4}{\circle*{.2}}}

\multiput(0,0)(5,0){4}{\line(0,1){15}}

\multiput(0,0)(0,5){4}{\line(1,0){15}}

\put(0,0){\line(1,1){15}}

\put(0,5){\line(1,1){10}}

\put(5,0){\line(1,1){10}}

\put(10,0){\line(1,1){5}}

\put(0,10){\line(1,1){5}}

\put(-1,-1.4){0}

\put(-1,15.4){0}

\put(15.3,-1.4){0}

\put(15.3,15.4){0}

\put(-1,4.5){1}

\put(-1,9.5){2}

\put(15.3,4.5){1}

\put(15.3,9.5){2}

\put(4.7,-1.4){1}

\put(9.7,-1.4){2}

\put(4.7,15.4){1}

\put(9.7,15.4){2}

\put(5.3,3.6){2}

\put(10.3,3.6){0}

\put(5.3,8.6){0}

\put(10.3,8.6){1}

\end{picture}}

\end{picture}
\caption{Triangulations of the two-dimensional torus:
(\textit{a})~$7$-vertex; (\textit{b}) $9$-vertex} \label{fig_T2}
\end{figure}

\begin{figure}
\unitlength=5mm
\begin{picture}(25,12.8)
\put(5,7){\begin{picture}(0,0)

\put(-5,0){\circle*{0.2}}

\put(-2.5,0){\circle*{0.2}}

\put(2.5,0){\circle*{0.2}}

\put(5,0){\circle*{0.2}}

\put(0,2.5){\circle*{0.2}}

\put(0,-2.5){\circle*{0.2}}

\put(-2.5,5){\circle*{0.2}}

\put(-2.5,-5){\circle*{0.2}}

\put(2.5,5){\circle*{0.2}}

\put(2.5,-5){\circle*{0.2}}

\put(-5,0){\line(1,0){10}}

\put(-2.5,-5){\line(0,1){10}}

\put(2.5,-5){\line(0,1){10}}

\put(-2.5,0){\line(1,1){5}}

\put(-2.5,0){\line(1,-1){5}}

\put(2.5,0){\line(-1,1){5}}

\put(2.5,0){\line(-1,-1){5}}

\thicklines

\put(-5,0){\line(1,2){2.5}}

\put(-5,0){\line(1,-2){2.5}}

\put(5,0){\line(-1,2){2.5}}

\put(5,0){\line(-1,-2){2.5}}

\put(-2.5,5){\line(1,0){5}}

\put(-2.5,-5){\line(1,0){5}}

\put(5,0){\vector(-1,2){1.3}}

\put(-5,0){\vector(1,-2){1.3}}

\put(2.5,5){\vector(-1,0){2.45}}

\put(2.5,5){\vector(-1,0){2.85}}

\put(-2.5,-5){\vector(1,0){2.45}}

\put(-2.5,-5){\vector(1,0){2.85}}

\put(-2.5,5){\vector(-1,-2){1.1}}

\put(-2.5,5){\vector(-1,-2){1.3}}

\put(-2.5,5){\vector(-1,-2){1.5}}

\put(2.5,-5){\vector(1,2){1.1}}

\put(2.5,-5){\vector(1,2){1.3}}

\put(2.5,-5){\vector(1,2){1.5}}

\it{\Large

\put(-3.7,-1.7){b}

\put(2.9,-1.7){b}

\put(-3.7,1){w}

\put(2.9,1){w}

\put(-1.8,-2.9){w}

\put(-1.8,2.1){b}

\put(1.1,-2.9){w}

\put(1.1,2.1){b}

\put(-.3,.7){w}

\put(-.3,3.5){w}

\put(-.3,-4.3){b}

\put(-.3,-1.4){b}}

\put(-0.25,-7){a}

\end{picture}}

\put(20,7){\begin{picture}(0,0)

\put(-5,0){\circle*{0.2}}

\put(0,0){\circle*{0.2}}

\put(5,0){\circle*{0.2}}

\put(0,5){\circle*{0.2}}

\put(0,-5){\circle*{0.2}}

\put(4,-4){\circle*{0.2}}

\put(4,4){\circle*{0.2}}

\put(-4,-4){\circle*{0.2}}

\put(-4,4){\circle*{0.2}}

\put(2,-2){\circle*{0.2}}

\put(2,2){\circle*{0.2}}

\put(-2,-2){\circle*{0.2}}

\put(-2,2){\circle*{0.2}}

\put(-4,-4){\line(1,1){8}}

\put(-4,4){\line(1,-1){8}}

\put(-2,-2){\line(0,1){4}}

\put(-2,-2){\line(1,0){4}}

\put(-2,2){\line(1,0){4}}

\put(2,-2){\line(0,1){4}}

\put(-5,0){\line(3,2){3}}

\put(-5,0){\line(3,-2){3}}

\put(5,0){\line(-3,-2){3}}

\put(5,0){\line(-3,2){3}}

\put(0,-5){\line(2,3){2}}

\put(0,-5){\line(-2,3){2}}

\put(0,5){\line(-2,-3){2}}

\put(0,5){\line(2,-3){2}}

\thicklines

\put(-5,0){\line(1,4){1}}

\put(-5,0){\line(1,-4){1}}

\put(5,0){\line(-1,4){1}}

\put(5,0){\line(-1,-4){1}}

\put(0,-5){\line(4,1){4}}

\put(0,-5){\line(-4,1){4}}

\put(0,5){\line(4,-1){4}}

\put(0,5){\line(-4,-1){4}}

\put(5,0){\vector(-1,4){0.6}}

\put(-5,0){\vector(1,-4){.6}}

\put(4,4){\vector(-4,1){2.4}}

\put(4,4){\vector(-4,1){2}}

\put(-4,-4){\vector(4,-1){2.4}}

\put(-4,-4){\vector(4,-1){2}}

\put(0,5){\vector(-4,-1){1.8}}

\put(0,5){\vector(-4,-1){2.2}}

\put(0,5){\vector(-4,-1){2.6}}

\put(0,-5){\vector(4,1){1.8}}

\put(0,-5){\vector(4,1){2.2}}

\put(0,-5){\vector(4,1){2.6}}

\put(-4,4){\vector(-1,-4){0.45}}

\put(-4,4){\vector(-1,-4){0.55}}

\put(-4,4){\vector(-1,-4){0.65}}

\put(-4,4){\vector(-1,-4){0.75}}

\put(4,-4){\vector(1,4){0.45}}

\put(4,-4){\vector(1,4){0.55}}

\put(4,-4){\vector(1,4){0.65}}

\put(4,-4){\vector(1,4){0.75}}

\put(-.2,-.9){0}

\put(-.2,5.4){0}

\put(-.2,-5.9){0}

\put(-5.7,-.3){0}

\put(5.3,-.3){0}

\put(1.8,2.4){1}

\put(1.8,-3){2}

\put(-2.2,2.4){2}

\put(-2.2,-3){1}

\put(-4.5,-4.8){2}

\put(-4.6,4.1){1}

\put(4.1,-4.8){1}

\put(4.2,4.1){2}

\put(-0.25,-7){\textit{b}}

\end{picture}}

\end{picture}
\caption{Triangulations of~$\RP^2$: (\textit{a})~$7$-vertex;
(\textit{b})~$9$-vertex} \label{fig_RP2}

\end{figure}
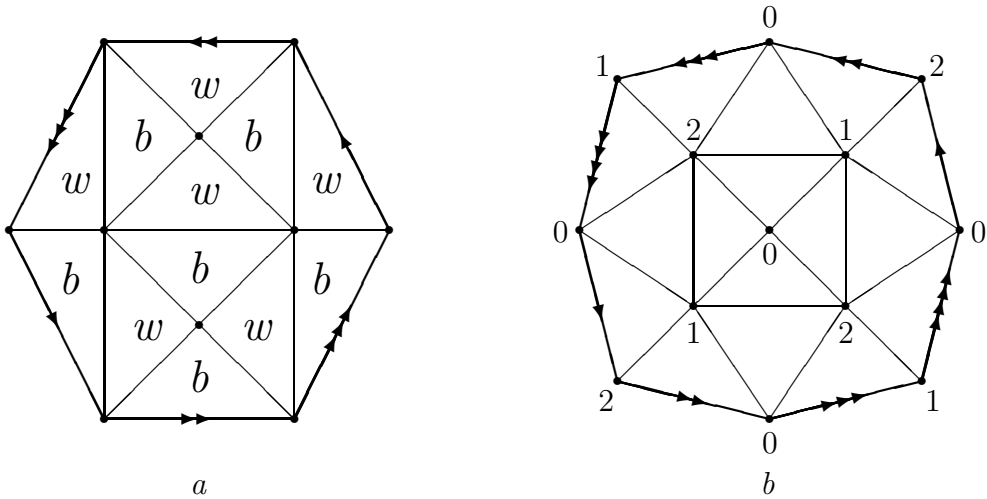

\begin{ex}
The complex projective plane is simply-connected. Therefore,
$\T_{colour}(\CP^2)=\T_{bw}(\CP^2)=\T_{even}(\CP^2)$. By
proposition~\ref{propos_3n+3} a minimal triangulation in the
class~$\T_{colour}(\CP^2)$ has at least $15$ vertices. On the
other hand, the combinatorial manifold~$X$ constructed in
section~~\ref{section_constr} has $15$ vertices and admits a
regular colouring of vertices in $5$ colours. In
section~\ref{section_explicit} we shall prove that $X$ is a
piecewise linear triangulation of~$\CP^2$. Hence $X$ is a minimal
triangulation in the class~$\T_{colour}(\CP^2)$. The author does
not know whether a minimal triangulation in this class is unique.
\end{ex}

\section{Explicit realization of~$X$ as a triangulation of~$\CP^2$}
\label{section_explicit} First, let us construct a representation
of the group~$\Sym(X)=S_4\times S_3$ to the isometry group of the
Fubini--Study metric on~$\CP^2$.

Let $\conj:\bC^3\to\bC^3$ be the operator of coordinatewise
complex conjugation,
$\conj(z_1,z_2,z_3)=(\bar{z}_1,\bar{z}_2,\bar{z}_3)$. By $\hU(3)$
we denote the group of $\R$-linear automorphisms of~$\bC^3$
generated by the unitary group~$U(3)$ and the operator~$\conj$.
Any element of~$\hU(3)$ is either $\bC$-linear or
$\bC$-antilinear. The unitary group $U(3)\subset\hU(3)$ is a
subgroup of index $2$ and coincides with the
intersection~$\hU(3)\cap GL(3,\bC)$. By $D$ we denote the group of
diagonal unitary matrices, $D=\{\lambda E\mid |\lambda|=1\}$. Then
$D\subset\hU(3)$ is a normal subgroup. (Notice that the
subgroup~$D$ is not central.) We put $P\hU(3)=\hU(3)/D$. By~$[g]$
we denote the image of an element~$g\in\hU(3)$ in~$P\hU(3)$. The
projective unitary group~$PU(3)$ is a subgroup of index~$2$
of~$P\hU(3)$. The group $PU(3)$ acts on~$\CP^2$ by isometries of
the Fubini--Study metric. The operator $\conj$ also induces the
isometry $(z_1:z_2:z_3)\mapsto(\bar z_1:\bar z_2:\bar z_3)$.
Therefore the group~$P\hU(3)$ is the subgroup of the isometry
group of the Fubini--Study metric. Indeed, $P\hU(3)$ coincides
with the group of all isometries of~$\CP^2$.

Now we construct a projective representation $R:S_4\times S_3\to
P\hU(3)$ in the following way. We start with the standard
representation $\rho:S_4\to O(3,\R)\subset U(3)$ that realizes the
group~$S_4$ as the symmetry group of a regular
tetrahedron~$T\subset\R^3$ with vertices numbered by~$1,2,3,4$. It
is convenient to us to place the tetrahedron~$T$ in~$\R^3$ so that
the vectors~$e_1,e_2,e_3$ of the standard orthonormal basis
of~$\R^3$ coincide with the vectors from the center of the
tetrahedron~$T$ to the midpoints of the edges $14$, $24$, and $34$
respectively. Then the representation~$\rho$ is given on the
standard generators of~$S_4$ by
$$
\rho\bigl((12)\bigr)=
\begin{pmatrix}
0&1&0\\
1&0&0\\
0&0&1
\end{pmatrix};\qquad \rho\bigl((23)\bigr)=
\begin{pmatrix}
1&0&0\\
0&0&1\\
0&1&0
\end{pmatrix};\qquad
\rho\bigl((34)\bigr)=
\begin{pmatrix}
0&-1&0\\
-1&0&0\\
0&0&1
\end{pmatrix}.
$$
Now we define a representation $\hrho: S_4\to\hU(3)$ by putting
$\hrho(\theta)=\rho(\theta)$ if $\theta$ is an even permutation
and $\hrho(\theta)=\rho(\theta)\conj$ if $\theta$ is an odd
permutation. The representation~$\hrho$ is well defined, since
real matrices commute with~$\conj$ and~$\conj$ has order~$2$.

Now we define a representation $\eta:S_3\to \hU(3)$ on generators
by
$$
\eta\bigl((12)\bigr)=\conj;\qquad \eta\bigl((123)\bigr)=
\begin{pmatrix}
1&0&0\\
0&\omega&0\\
0&0&\omega^2
\end{pmatrix},
$$
where $\omega=e^{\frac{2\pi i}3}$ is the cubic root of unity. It
can be immediately checked that the representation~$\eta$ is well
defined and the following proposition holds.

\begin{propos}
If $\theta\in S_4$ and $\varkappa\in S_3$, then the commutator
$$(\hrho(\theta),\eta(\varkappa))=\hrho(\theta)\eta(\varkappa)\hrho(\theta)^{-1}\eta(\varkappa)^{-1}$$
lies in~$D$.
\end{propos}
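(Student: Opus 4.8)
The plan is to reduce the claim to a finite check on the generators of $S_4$ and $S_3$, exploiting the fact that $D$ is a normal subgroup of $\hU(3)$ and that the quotient $P\hU(3)=\hU(3)/D$ is the object where the relation we actually want (namely that $R$ is a genuine homomorphism) is being assembled. First I would observe that it suffices to prove $(\hrho(\theta),\eta(\varkappa))\in D$ when $\theta$ runs over a generating set of $S_4$, say $\{(12),(23),(34)\}$, and $\varkappa$ runs over $\{(12),(123)\}$. This reduction is not completely automatic because $D$ is \emph{not} central in $\hU(3)$, so one cannot naively multiply commutator relations; however, $D$ is normal, so the map $\hU(3)\to P\hU(3)$ is a group homomorphism, and $(\hrho(\theta),\eta(\varkappa))\in D$ is equivalent to $[\hrho(\theta)]$ and $[\eta(\varkappa)]$ commuting in $P\hU(3)$. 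The set of pairs $(g,h)$ of elements of $P\hU(3)$ that commute with a fixed element is a subgroup in each variable, and $[\hrho]$, $[\eta]$ are homomorphisms on $S_4$ and $S_3$; hence commuting on generators forces commuting on all of $S_4\times S_3$. So the whole statement follows once the generator-level commutators are checked.

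Next I would carry out the generator computations explicitly. For each pair $(\theta,\varkappa)$ with $\theta\in\{(12),(23),(34)\}$ and $\varkappa\in\{(12),(123)\}$, I would write down the $\R$-linear operators $\hrho(\theta)$ and $\eta(\varkappa)$ as elements of $\hU(3)$ (either an honest matrix, or a matrix followed by $\conj$), conjugate, and verify that the result is $\lambda E$ for some unimodular $\lambda$. The two mildly non-routine points are: (i) conjugating the $\bC$-antilinear generators — e.g. $\eta((12))=\conj$ and $\hrho((34))=\rho((34))\conj$ — where one must track that $\conj M \conj = \bar M$ for a matrix $M$, so that conjugation by an antilinear element replaces a matrix by its complex conjugate; and (ii) the interaction of the diagonal generator $\eta((123))=\mathrm{diag}(1,\omega,\omega^2)$ with the permutation-type matrices $\rho((12)),\rho((23)),\rho((34))$, which permute and sign-change the coordinates. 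A permutation matrix conjugating a diagonal matrix produces the permuted diagonal matrix, so the commutator $\rho(\theta)\,\mathrm{diag}(1,\omega,\omega^2)\,\rho(\theta)^{-1}\,\mathrm{diag}(1,\omega^2,\omega)$ is itself diagonal; one then checks it is a \emph{scalar} diagonal matrix. This is where the specific placement of the tetrahedron (so that $e_1,e_2,e_3$ point to the midpoints of edges $14,24,34$) pays off: the transpositions $(12),(23)$ act on $e_1,e_2,e_3$ as honest coordinate permutations while $(34)$ introduces a sign, and in every case the resulting diagonal commutator collapses to a cube root of unity times the identity. For the antilinear cases one similarly finds the commutator is $\pm E$ or $\bar\omega^{\,k}E$.

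The step I expect to be the main obstacle is purely bookkeeping: getting the signs right in the conjugations that mix the antilinear operator $\conj$ with the sign-bearing matrix $\rho((34))$ and with the complex diagonal matrix $\eta((123))$, since $\conj$ does not commute with $\eta((123))$ (indeed $\conj\,\mathrm{diag}(1,\omega,\omega^2)\,\conj=\mathrm{diag}(1,\omega^2,\omega)$), so the order of composition matters at every stage. I would organize this by first recording the three basic conjugation rules — $M\,\mathrm{diag}(a,b,c)\,M^{-1}$ for $M$ a signed permutation matrix, $\conj\,N\,\conj=\bar N$, and $(N\conj)^{-1}=\conj\, N^{-1}=\overline{N^{-1}}\,\conj$ — and then mechanically applying them to the at most six generator pairs. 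Once each of these six commutators is exhibited as an explicit scalar matrix $\lambda E$ with $|\lambda|=1$, the reduction in the first paragraph finishes the proof.
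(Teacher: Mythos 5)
Your proposal is correct: the reduction to generators via the quotient $P\hU(3)$ (which is legitimate precisely because $D$ is normal, even though not central) is sound, and the generator-level computations do come out as scalar matrices ($E$ or $\omega^2E$), so the commutators lie in $D$. The paper gives no proof at all, merely asserting the proposition "can be immediately checked," and your argument is exactly the natural direct verification the author had in mind, organized slightly more carefully through the centralizer/generator reduction.
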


This proposition implies that the formula
$$
R\bigl((\theta,\varkappa)\bigr)=\left[\hrho(\theta)\eta(\varkappa)\right]
$$
yields a well-defined projective representation $R:S_4\times
S_3\to P\hU(3)$. In the sequel we always regard the
group~$P\hU(3)$ as an isometry group of the Fubini--Study metric
and identify the group~$S_4\times S_3$ with a subgroup
of~$P\hU(3)$ by the representation~$\eta$.

Recall that~$\CP^2$ can be decomposed into three four-dimensional
disks~$B_1$, $B_2$, and $B_3$ whose pairwise intersections are
solid tori~$\Pi_1$, $\Pi_2$, and $\Pi_3$ and whose triple
intersection is a two-dimensional torus~$T$. A triangulation
of~$\CP^2$ is said to be \textit{equilibrium}\/, if the
disks~$B_j$, the solid tori~$\Pi_j$, and the torus~$T$ are
subcomplexes of this triangulation. T.\,F.\,Banchoff and
W.\,K\"uhnel constructed a series of equilibrium triangulations
of~$\CP^2$ depending on two coprime natural numbers~$p$ and~$q$.
The triangulation corresponding to a pair $(p,q)$ has
$p^2+pq+q^2+3$ vertices. It is based on a $(p^2+pq+q^2)$-vertex
triangulation of the two-dimensional torus~$T$. This triangulation
of the torus~$T$ is the \textit{regular map}\/ $\{3,6\}_{p,q}$ on
the torus (see~\cite{CoMo80}). The triangulation~$\{3,6\}_{p,q}$
can be obtained in the following way. Let us consider the standard
triangulation of plane by rectilinear triangles with vertices in
the points of the lattice generated by two unit vectors~$e_1$ and
$e_2$ with angle~$\frac{2\pi}{3}$ between them. We factorize this
triangulation by the sublattice generated by the vectors with
coordinates $(p-q,2p+q)$ and $(2p+q,p+2q)$ in the
basis~$(e_1,e_2)$. If $p+q\ge 3$, then we obtain a well-defined
$(p^2+pq+q^2)$-vertex triangulation of the two-dimensional torus.
For coprime $p$ and $q$ T.\,F.\,Banchoff and W.\,K\"uhnel found a
way to extend the triangulation~$\{3,6\}_{p,q}$ to a triangulation
of a solid torus without adding new vertices. Since the
triangulation~$\{3,6\}_{p,q}$ is invariant under rotation
by~$\frac{2\pi}{3}$, such extension automatically yields three
different extensions, which are taken to each other by rotations.
Realize these three extensions on the solid tori~$\Pi_1$, $\Pi_2$,
and~$\Pi_3$. Now we introduce three new vertices in the interiors
of the disks~$B_1$, $B_2$, and $B_3$ and triangulate every of
these disks as a cone over the constructed triangulation of its
boundary. The simplest of the triangulations obtained corresponds
to the pair~$(2,1)$ and has $10$ vertices. The corresponding
triangulation~$\{3,6\}_{2,1}$ of the torus is the minimal
triangulation of the torus (see Fig.~\ref{fig_T2},\textit{a}).

Let us now realize the combinatorial manifold~$X$ as an
equilibrium triangulation of~$\CP^2$. Our construction is based on
the $12$-vertex triangulation~$\{3,6\}_{2,2}$ of the
two-dimensional torus. We shall follow Banchoff--K\"uhnel's method
everywhere except for the way of constructing triangulations of
solid tori~$\Pi_j$ because Banchoff--K\"uhnel's method does not
work if the numbers~$p$ and~$q$ are not coprime.

Let us consider the subcomplex~$\T\subset X$ consisting of all
simplices~$\sigma$ such that $\sigma\cup\{\nu\}$ is a simplex
of~$X$ for any vertex~$\nu\in V_4\setminus\{e\}$. It is easy to
check that $\T$ is a triangulation the two-dimensional torus shown
in Fig.~\ref{fig_t12}. This triangulation is isomorphic to the
triangulation~$\{3,6\}_{2,2}$. Now let us consider the
subcomplex~$\P_1\subset X$ consisting of all simplices~$\sigma$
such that $\sigma\cup\{(12)(34)\}$ and $\sigma\cup\{(13)(24)\}$
are simplices of~$X$. (The subcomplexes~$\P_2$ and~$\P_3$ are
defined in a similar way.) The one-dimensional skeleton of~$\P_1$
is obtained from the one-dimensional skeleton of~$\T$ by adding
the $6$ edges $(1,b),(4,b)$ and $(2,b),(3,b)$, $b=1,2,3$. (These
edges are drawn by dotted arcs in Fig.~\ref{fig_t12}.) The
complex~$\P_1$ consists of the three-dimensional simplices
$$
(1,b_1),(4,b_1),(2,b_2),(3,b_3),\quad
(2,b_1),(3,b_1),(1,b_2),(4,b_3),\quad
(1,b_1),(4,b_1),(2,b_2),(3,b_2),
$$
where $b_1,b_2,b_3$ are pairwise distinct, and faces of these
three-dimensional simplices. It is easy to see that $\P_1$ is a
$12$-vertex triangulation of a solid torus with boundary~$\T$. The
triangulation~$\P_1$ is shown in Fig.~\ref{fig_st12}. Here we
imply that the left face and the right face of the parallelepiped
shown in this figure are identified after rotation by~$\pi$.
Similarly, $\P_2$ and $\P_3$ are also triangulated solid tori with
the same vertex sets. Besides, it is easy to show that the
complex~$X$ is obtained from the complex~$\P_1\cup\P_2\cup\P_3$ by
adding cones over the subcomplexes~$\P_1\cup P_2$, $\P_1\cup
\P_3$, and~$\P_2\cup\P_3$ with vertices~$(12)(34)$, $(13)(24)$,
and~$(14)(23)$ respectively. Thus the complex~$X$ can be realized
as an equilibrium triangulation of~$\CP^2$ in the following way.
Firstly, we realize the subcomplex~$\T$ as a triangulation of the
torus~$T$. Secondly, we realize the subcomplexes~$\P_j$ as
triangulations of solid tori~$\Pi_j$, $j=1,2,3$. Finally, we put
vertices~$(14)(23)$, $(13)(24)$, and~$(12)(34)$ inside the
four-dimensional disks~$B_1$, $B_2$, and $B_3$ respectively and
add to the triangulation the cones with these vertices over the
triangulations~$\P_2\cup\P_3$, $\P_1\cup \P_3$, and~$\P_1\cup P_2$
respectively.

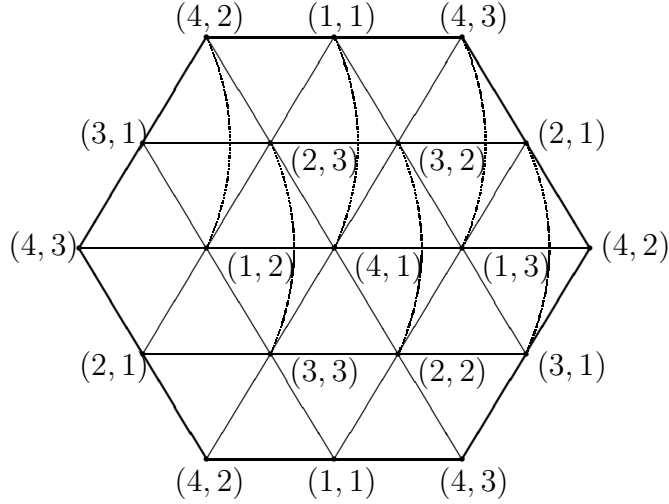
\begin{figure}
\unitlength=0.28cm
\begin{picture}(30,24)
\put(7.5,0.5){$(4,2)$}

\put(13.7,0.5){$(1,1)$}

\put(19.9,0.5){$(4,3)$}

\put(7.5,22.5){$(4,2)$}

\put(13.7,22.5){$(1,1)$}

\put(19.9,22.5){$(4,3)$}

\put(3,6){$(2,1)$}

\put(3,17){$(3,1)$}

\put(-0.3,11.7){$(4,3)$}

\put(24.5,6){$(3,1)$}

\put(24.5,17){$(2,1)$}

\put(27.5,11.7){$(4,2)$}

\put(12.9,5.7){$(3,3)$}

\put(18.9,5.7){$(2,2)$}

\put(9.9,10.7){$(1,2)$}

\put(15.9,10.7){$(4,1)$}

\put(21.9,10.7){$(1,3)$}

\put(18.9,15.7){$(3,2)$}

\put(12.9,15.7){$(2,3)$}

\multiput(9,22)(6,0){3}{\circle*{.3}}

\multiput(6,17)(6,0){4}{\circle*{.3}}

\multiput(3,12)(6,0){5}{\circle*{.3}}

\multiput(6,7)(6,0){4}{\circle*{.3}}

\multiput(9,2)(6,0){3}{\circle*{.3}}

\put(6,17){\line(1,0){18}}

\put(3,12){\line(1,0){24}}

\put(6,7){\line(1,0){18}}

\put(6,7){\line(3,5){9}}

\put(9,2){\line(3,5){12}}

\put(15,2){\line(3,5){9}}

\put(6,17){\line(3,-5){9}}

\put(9,22){\line(3,-5){12}}

\put(15,22){\line(3,-5){9}}

\qbezier[80](9,12)(11.2,17)(9,22)

\qbezier[80](15,12)(17.2,17)(15,22)

\qbezier[80](21,12)(23.2,17)(21,22)

\qbezier[80](12,7)(14.2,12)(12,17)

\qbezier[80](18,7)(20.2,12)(18,17)

\qbezier[80](24,7)(26.2,12)(24,17)

\thicklines

\put(9,22){\line(1,0){12}}

\put(9,2){\line(1,0){12}}

\put(3,12){\line(3,5){6}}

\put(21,2){\line(3,5){6}}

\put(3,12){\line(3,-5){6}}

\put(21,22){\line(3,-5){6}}

\end{picture}
\caption{The $12$-vertex triangulation~$\T$ of the two-dimensional
torus} \label{fig_t12}
\end{figure}
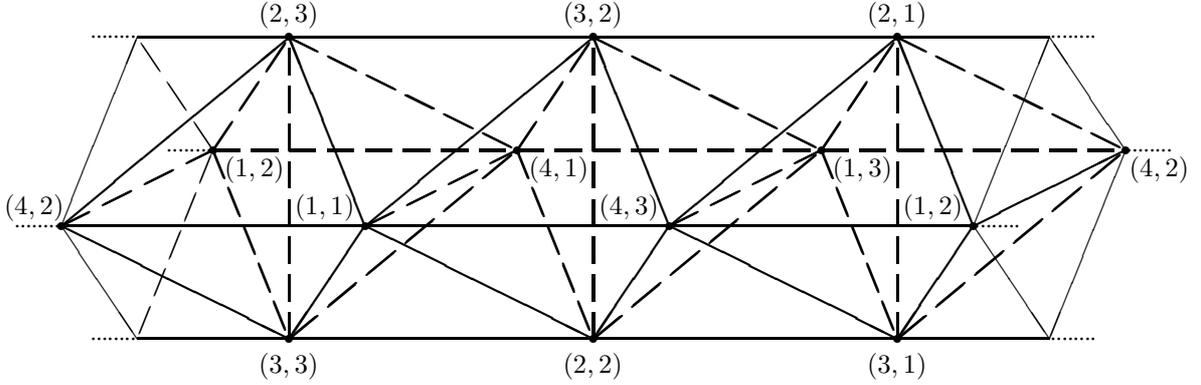
\begin{figure}
\unitlength=5mm
\begin{picture}(30,11)

\thicklines

\put(1,4.5){\line(1,0){24}}

\put(25,4.5){\circle*{0.2}}

\put(29,6.5){\circle*{0.2}}

\put(3,1.5){\line(1,0){24}}

\put(3,9.5){\line(1,0){24}}

\multiput(5,6.5)(1,0){24}{\line(1,0){0.7}}

\multiput(1,1.5)(8,0){3}{\begin{picture}(0,0)

\put(0,3){\circle*{0.2}}

\put(4,5){\circle*{0.2}}

\put(6,0){\circle*{0.2}}

\put(6,8){\circle*{0.2}}

\put(0,3){\line(6,5){6}}

\put(0,3){\line(2,-1){6}}

\put(8,3){\line(-2,5){2}}

\put(8,3){\line(-2,-3){2}}

\multiput(0,3)(1.1,0.55){4}{\line(2,1){0.75}}

\multiput(4,5)(0.7,1.05){3}{\line(2,3){0.5}}

\multiput(4,5)(0.4,-1){5}{\line(2,-5){0.3}}

\multiput(6,0)(0,1){8}{\line(0,1){0.7}}

\multiput(6,0)(1.02,0.85){6}{\line(6,5){0.75}}

\multiput(6,8)(1.02,-0.51){6}{\line(2,-1){0.75}}

\end{picture}}

\put(25,4.5){\line(2,1){4}}

\thinlines

\put(1,4.5){\line(2,5){2}}

\put(1,4.5){\line(2,-3){2}}

\put(25,4.5){\line(2,5){2}}

\put(25,4.5){\line(2,-3){2}}

\put(29,6.5){\line(-2,-5){2}}

\put(29,6.5){\line(-2,3){2}}

\multiput(4.95,6.375)(-0.4,-1){5}{\line(-2,-5){0.3}}

\multiput(5,6.5)(-0.7,1.05){3}{\line(-2,3){0.5}}

\multiput(0.9,4.5)(-0.15,0){8}{\circle*{0.03}}

\multiput(2.9,1.5)(-0.15,0){8}{\circle*{0.03}}

\multiput(4.9,6.5)(-0.15,0){8}{\circle*{0.03}}

\multiput(2.9,9.5)(-0.15,0){8}{\circle*{0.03}}

\multiput(25.1,4.5)(0.15,0){8}{\circle*{0.03}}

\multiput(27.1,1.5)(0.15,0){8}{\circle*{0.03}}

\multiput(29.1,6.5)(0.15,0){8}{\circle*{0.03}}

\multiput(27.1,9.5)(0.15,0){8}{\circle*{0.03}}

\footnotesize

\put(7.15,4.8){$(1,1)$}

\put(15.15,4.8){$(4,3)$}

\put(23.15,4.8){$(1,2)$}

\put(-0.5,4.8){$(4,2)$}

\put(5.3,5.8){$(1,2)$}

\put(13.3,5.8){$(4,1)$}

\put(21.3,5.8){$(1,3)$}

\put(29.1,5.8){$(4,2)$}

\put(6.2,9.9){$(2,3)$}

\put(14.2,9.9){$(3,2)$}

\put(22.2,9.9){$(2,1)$}

\put(6.2,0.6){$(3,3)$}

\put(14.2,0.6){$(2,2)$}

\put(22.2,0.6){$(3,1)$}

\end{picture}

\caption{The $12$-vertex triangulation~$\P_1$ of the solid torus}
\label{fig_st12}
\end{figure}

Now we shall construct these realizations explicitly so that the
action of the automorphism group~$\Sym(X)$ on~$X$ will be
identified with the constructed action of the group~$S_4\times
S_3$ by isometries of~$\CP^2$. First, to vertices $s$ of~$X$ we
assign the following points~$v_s\in\CP^2$,
\begin{gather*}
v_{(14)(23)}=(1:0:0),\qquad v_{(13)(24)}=(0:1:0),\qquad
v_{(12)(34)}=(0:0:1),\\
v_{(1,b)}=(-1:\omega^b:\omega^{2b}),\qquad
v_{(2,b)}=(1:-\omega^b:\omega^{2b}),\qquad
v_{(3,b)}=(1:\omega^b:-\omega^{2b}),\\
v_{(4,b)}=(1:\omega^b:\omega^{2b}),\qquad b=1,2,3.
\end{gather*}
It can be immediately checked that this set of points is
$(S_4\times S_3)$-invariant and $R(h)v_s=v_{h\cdot s}$ for any
element~$h\in S_4\times S_3$ and any vertex~$s$ of~$X$.

The points $v_{(a,b)}$ lie in the torus~$T$. The torus~$T$ is flat
in the Fubini--Study metric and the $12$ points
$(1:\pm\omega^b:\pm\omega^{2b})$ lie in it exactly at vertices of
the triangulation~$\{3,6\}_{2,2}$ consisting of rectilinear
triangles. Thus the triangulation~$\T$ is realized as a flat
triangulation of the torus~$T$ with vertices at the
points~$v_{(a,b)}$. The edges of~$\T$ are realized by the shortest
geodesic segments. For example, the edge with endpoints $(1,1)$
and $(2,2)$ is realized by the geodesic segment
$\left(e^{-\frac{i\pi}3(1+t)}:e^{-\frac{i\pi}3(2-t)}:1\right)$,
$t\in[0,1]$. Other edges can be obtained from this one by the
action of the group~$S_4\times S_3$.

Let us now construct an explicit triangulation of the solid
torus~$\Pi_1$. We realize the edge~$(1,1),(4,1)$ by the geodesic
segment~$(-t:\omega:\omega^2)$, $t\in[-1,1]$. Other
edges~$(a_1,b),(a_2,b)$ can be obtained from this one by the
action of the group~$S_4\times S_3$. To realize three-dimensional
simplices it is convenient to endow the solid torus~$\Pi_1$ by a
metric different from the Fubini--Study metric, namely, by a flat
metric coinciding with the Fubini--Study metric on the torus~$T$.
This flat metric is defined in the following way. We parametrize
the torus~$T$ by
$$
\bt(\varphi,\psi)=\left(e^{i\varphi}:e^{i\psi}:e^{-i\psi}\right),\qquad
\varphi,\psi\in\R/(2\pi\Z).
$$
This parametrization is two-to-one
since~$\bt(\varphi+\pi,\psi+\pi)=\bt(\varphi,\psi)$. The
restriction of the Fubini--Study metric to the torus~$T$ is equal
to~$\frac{2}{9}d\varphi^2+\frac{2}{3}d\psi^2$. The length of the
circle~$\psi=c$, which is homological to zero in the solid
torus~$\Pi_1$, is equal to~$\frac{2\sqrt{2}\pi}{3}$. Now we
parametrize (two-to-one) the solid torus~$\Pi_1$ by
\begin{multline*}
\bp(x,y,h)=\left(2(|x|+|y|)\left( \sin\frac{\pi
x}{2(|x|+|y|)}+i\sin\frac{\pi y}{2(|x|+|y|)}\right)\right.:\\
\left.\vphantom{\frac{\pi
y}{2(|x|+|y|)}}:\left(\frac{\pi}{2}-|x|-|y|\right)e^{i\sqrt{\frac32}h}:
\left(\frac{\pi}{2}-|x|-|y|\right)e^{-i\sqrt{\frac32}h}\right),\quad
|x|+|y|\le \frac{\pi}{6},\
h\in\R/{\textstyle\left(2\sqrt{\frac23}\pi\Z\right)}
\end{multline*}
and endow it with the flat metric~$dx^2+dy^2+dh^2$. It is easy to
check that the restriction of this metric to the torus~$T$
coincides with the Fubini--Study metric. Indeed, every
section~$h=c$ of the solid torus is the square with
vertices~$\bigl(\pm\frac{\pi}6,0\bigr),\bigl(0,\pm\frac{\pi}6\bigr)$
and perimeter~$\frac{2\sqrt{2}\pi}{3}$ in the Euclidean
coordinates~$(x,y)$.

The coordinates $(x,y,h)$ yield an isometric embedding of the
universal covering of the solid torus~$\Pi_1$ into~$\R^3$. The
image of this embedding is an infinite cylinder~$C$ over a square
with side length~$\frac{\sqrt{2}\pi}{6}$. The cylinder~$C$ is
given by the inequality~$|x|+|y|\le\frac{\pi}6$. The solid
torus~$\Pi_1$ is the quotient of the cylinder~$C$ by the action of
the infinite cyclic group~$\langle S\rangle$ generated by the
isometry~$S:(x,y,h)\mapsto\bigl(-x,-y,h+\sqrt{\frac23}\pi\bigr)$.
Let $\widetilde{\P}_1$ be the universal covering of~$\P_1$. Under
the constructed embedding the vertices of~$\widetilde{\P}_1$ go to
the points $\left(\pm\frac{\pi}{6},0,\sqrt{\frac23}\frac{\pi
k}{3}\right)$,
$\left(0,\pm\frac{\pi}{6},\sqrt{\frac23}\bigl(\frac{\pi}{6}+\frac{\pi
k}{3}\bigr)\right)$, $k\in\Z$, and the edges of~$\widetilde{\P}_1$
go to rectilinear segments. Let us decompose the cylinder~$C$ into
convex tetrahedra as it is shown on Fig.~\ref{fig_st12}. This
decomposition consists of three series of tetrahedra

1) $\left(\frac{\varepsilon\pi}{6},0,\sqrt{\frac23}\frac{\pi
k}{3}\right),\left(0,\pm\frac{\pi}{6},\sqrt{\frac23}\left(\frac{\pi}{6}+\frac{\pi
k}{3}\right)\right),\left(\frac{\varepsilon\pi}{6},0,\sqrt{\frac23}
\frac{\pi(k+1)}{3}\right)$, where~$k\in\Z$, $\varepsilon=\pm 1$;

2)
$\left(0,\frac{\varepsilon\pi}{6},\sqrt{\frac23}\left(-\frac{\pi}{6}+\frac{\pi
k}{3}\right)\right),\left(\pm\frac{\pi}{6},0,\sqrt{\frac23}\frac{\pi
k}{3}\right),\left(0,\frac{\varepsilon\pi}{6},\sqrt{\frac23}\left(\frac{\pi}{6}+\frac{\pi
k}{3}\right)\right)$, where~$k\in\Z$, $\varepsilon=\pm 1$;

3) $\left(\pm\frac{\pi}{6},0,\sqrt{\frac23}\frac{\pi
k}{3}\right),\left(0,\pm\frac{\pi}{6},\sqrt{\frac23}\left(\frac{\pi}{6}+\frac{\pi
k}{3}\right)\right)$, where $k\in \Z$.

The triangulation constructed is invariant under the isometry~$S$
and, hence, yields a triangulation of the solid torus~$\Pi_1$. The
latter triangulation is the required realization of the simplicial
complex~$\P_1$. The triangulations of the solid tori~$\Pi_2$
and~$\Pi_3$ are constructed similarly.

Now we need to construct explicitly triangulations of the
disks~$B_j$. We consider the
coordinates~$Z_2=\frac{z_2|z_1|}{z_1(|z_1|+|z_2|+|z_3|)}$ and
$Z_3=\frac{z_3|z_1|}{z_1(|z_1|+|z_2|+|z_3|)}$. In these
coordinates the set~$B_1$ is given by the inequalities
$2|Z_1|+|Z_2|\le 1$, $2|Z_2|+|Z_1|\le 1$. The boundary of this
convex set is the union of the solid tori~$\Pi_2$ and~$\Pi_3$ with
triangulations constructed above. We triangulate the disk~$B_1$ as
the affine cone over the constructed triangulation of the
union~$\Pi_2\cup\Pi_3$ with vertex~$v_{(14)(23)}=(0,0)$.
Similarly, we triangulate the disks~$B_2$ and~$B_3$.

The above construction allows us to obtain an explicit formula for
the $(S_4\times S_3)$-equivariant piecewise smooth
homeomorphism~$f:|X|\to\CP^2$. We shall give explicit formulae for
the restrictions of the homeomorphism~$f$ to the four-dimensional
simplices
\begin{equation} \label{eq_Delta}
\begin{aligned}
\Delta_1&=\{(12)(34),(1,3),(2,1),(3,2),(4,3)\},\\
\Delta_2&=\{(12)(34),(1,3),(2,1),(3,1),(4,3)\}.
\end{aligned}
\end{equation}
The restrictions of~$f$ to other four-dimensional simplices of~$X$
can be obtained from the restrictions to the simplices~$\Delta_1$
and $\Delta_2$ by the action of the group~$S_4\times S_3$. The
restrictions of~$f$ to the simplices~$\Delta_1$ and~$\Delta_2$ are
given by
\begin{multline}
\label{eq_emb1} (\xi_0,\xi_1,\ldots,\xi_4)\mapsto
\left(\frac{|\xi_4-\xi_1|+\xi_2+\xi_3}{3}
\left(\sin\frac{\pi(\xi_4-\xi_1)}
{2(|\xi_4-\xi_1|+\xi_2+\xi_3)}+\right.\right.\\
\left.+i\sin\frac{\pi(\xi_2+\xi_3)}
{2(|\xi_4-\xi_1|+\xi_2+\xi_3)}\right):
\left(\frac{1-\xi_0}{2}-\frac{|\xi_4-\xi_1|+\xi_2+\xi_3}{6}\right)e^{\frac{i\pi(\xi_2-\xi_3)}{6(1-\xi_0)}}:\\
:\left.\left(\frac{1+\xi_0}{2}-\frac{|\xi_4-\xi_1|+\xi_2+\xi_3}{6}\right)e^{-\frac{i\pi(\xi_2-\xi_3)}{6(1-\xi_0)}}\right),
\end{multline}
\begin{multline}\label{eq_emb2}
(\zeta_0,\zeta_1,\ldots,\zeta_4)\mapsto
\left(\frac{|\zeta_4-\zeta_1|+|\zeta_2-\zeta_3|}{3}
\left(\sin\frac{\pi(\zeta_4-\zeta_1)}
{2(|\zeta_4-\zeta_1|+|\zeta_2-\zeta_3|)}+\right.\right.\\
\left.+i\sin\frac{\pi(\zeta_2-\zeta_3)}{2(|\zeta_4-\zeta_1|+|\zeta_2-\zeta_3|)}\right):
\left(\frac{1-\zeta_0}{2}-\frac{|\zeta_4-\zeta_1|+|\zeta_2-\zeta_3|}{6}\right)e^{\frac{i\pi(\zeta_2+\zeta_3)}{6(1-\zeta_0)}}:\\
:\left.\left(\frac{1+\zeta_0}{2}-\frac{|\zeta_4-\zeta_1|+|\zeta_2-\zeta_3|}{6}\right)e^{-\frac{i\pi(\zeta_2+\zeta_3)}{6(1-\zeta_0)}}\right),
\end{multline}
where $(\xi_0,\xi_1,\ldots,\xi_4)$ and
$(\zeta_0,\zeta_1,\ldots,\zeta_4)$,
$\sum_{j=0}^4\xi_j=\sum_{j=0}^4\zeta_j=1$, are the barycentric
coordinates in the simplices~$\Delta_1$ and~$\Delta_2$
respectively. (The barycentric coordinates are numbered
corresponding to those orderings of vertices of
simplices~$\Delta_1$ and $\Delta_2$ which are given
by~(\ref{eq_Delta}).)

In section~\ref{section_constr} we defined the subdivision~$\bX$
of the simplicial complex~$X$. It is convenient to rewrite
formulae~(\ref{eq_emb1}) and~(\ref{eq_emb2}) for the
homeomorphism~$f$ in the barycentric coordinates for simplices
of~$\bX$. The vertices~$(\widehat{a_1a_2},b)$ of~$\bX$ are the
midpoints of those edges of~$X$ that are contained in the
subcomplex~$\P_1\cup\P_2\cup\P_3$ but are not contained in the
subcomplex~$\T$. Hence under the homeomorphism~$f$ these vertices
go to the points
\begin{align*}
v_{(\widehat{12},b)}&=(1:-\omega^b:0),&
v_{(\widehat{13},b)}&=(-\omega^b:0:1),&
v_{(\widehat{23},b)}&=(0:1:-\omega^b),\\
v_{(\widehat{34},b)}&=(1:\omega^b:0),&
v_{(\widehat{24},b)}&=(\omega^b:0:1),&
v_{(\widehat{14},b)}&=(0:1:\omega^b).
\end{align*}
Recall that four-dimensional simplices of~$\bX$ are divided into
two $(S_4\times S_3)$-orbits with representatives
\begin{equation}
\label{eq_sigma}
\begin{aligned}
\sigma_1&=\{(12)(34),(\widehat{14},3),(2,1),(3,2),(4,3)\},\\
\sigma_2&=\{(12)(34),(\widehat{14},3),(\widehat{23},1),(2,1),(4,3)\}.
\end{aligned}
\end{equation}
Let $(p_0,p_1,\ldots,p_4)$ be the barycentric coordinates in the
simplex~$\sigma_1$. (The vertices of~$\sigma_1$ are ordered as
they are listed in~(\ref{eq_sigma}).) The simplex~$\sigma_1$ is
contained in the simplex~$\Delta_1$ and we have~$\xi_0=p_0$,
$\xi_1=\frac{p_1}{2}$, $\xi_2=p_2$, $\xi_3=p_3$,
$\xi_4=p_4+\frac{p_1}{2}$. Thus formula~(\ref{eq_emb1}) implies
that the restriction of the homeomorphism~$f$ to the
simplex~$\sigma_1$ is given by
\begin{multline}\label{eq_emb1'}
(p_0,p_1,\ldots,p_4)\mapsto
\left(\frac{p_2+p_3+p_4}{3}e^{\frac{i\pi
(p_2+p_3)}{2(p_2+p_3+p_4)}}:
\left(\frac{p_1}{2}+\frac{p_2+p_3+p_4}{3}\right)
e^{\frac{i\pi(p_2-p_3)}{6(1-p_0)}}:\right.\\
:\left.\left(p_0+\frac{p_1}{2}+\frac{p_2+p_3+p_4}{3}\right)
e^{-\frac{i\pi(p_2-p_3)}{6(1-p_0)}}\right).
\end{multline}
Similarly, let $(q_0,q_1,\ldots,q_4)$ be the barycentric
coordinates in the simplex~$\sigma_2$. (The vertices of~$\sigma_2$
are ordered as they are listed in~(\ref{eq_sigma}).) Then the
restriction of the homeomorphism~$f$ to the simplex~$\sigma_2$ is
given by
\begin{multline}\label{eq_emb2'}
(q_0,q_1,\ldots,q_4)\mapsto \left(\frac{q_3+q_4}{3}e^{\frac{i\pi
q_3}{2(q_3+q_4)}}:
\left(\frac{q_1+q_2}{2}+\frac{q_3+q_4}{3}\right)
e^{\frac{i\pi(q_2+q_3)}{6(1-q_0)}}:\right.\\
:\left.\left(q_0+\frac{q_1+q_2}{2}+\frac{q_3+q_4}{3}\right)
e^{-\frac{i\pi(q_2+q_3)}{6(1-q_0)}}\right).
\end{multline}
The formulae for the restrictions of~$f$ to other four-dimensional
simplices of~$\bX$ can be obtained from formulae~(\ref{eq_emb1'})
and~(\ref{eq_emb2'}) by the action of the group~$S_4\times S_3$.

\section{Triangulation of the moment mapping}
\label{section_moment} In this section we shall construct a
triangulation of the classical moment
mapping~$\mu:\CP^2\to\Delta^2$ given by~(\ref{eq_moment}). This
means that we shall construct a triangulation of~$\CP^2$ and a
triangulation of the triangle~$\Delta^2$ such that the mapping
$\mu$ is simplicial with respect to this pair of triangulations.
For a triangulation of~$\CP^2$ we take the triangulation~$\bX$.
(Recall that the explicit homomorphism~$f:|\bX|\to\CP^2$ was
constructed in section~\ref{section_explicit}).) For a
triangulation of the triangle~$\Delta^2$ we take its barycentric
subdivision~$(\Delta^2)'$.

We introduce an action of the group~$S_4\times S_3$ on the
triangle~$\Delta^2$ such that the multiplier~$S_3$ acts trivially
and the multiplier~$S_4$ acts by linear mappings permuting the
vertices of the triangle~$\Delta^2$ according to the
homomorphism~$S_4\to S_4/V_4\cong S_3$. Then the action is given
by
\begin{gather*}
(12)\cdot(t_1,t_2,t_3)=(34)\cdot(t_1,t_2,t_3)=(t_2,t_1,t_3);\\
(23)\cdot(t_1,t_2,t_3)=(t_1,t_3,t_2)
\end{gather*}
on the generators of~$S_4$. Recall that the group~$S_4\times S_3$
acts on~$\CP^2$ by isometries and the action is given by the
projective representation~$R$ (see
section~\ref{section_explicit}). It is easy to check that the
mapping~$\mu$ is equivariant with respect to the above pair of
actions.

We define a simplicial mapping $m:\bX\to (\Delta^2)'$ on the
vertices of~$\bX$ by
\begin{gather*}
m\bigl((12)(34)\bigr)=(0,0,1);\quad
m\bigl((13)(24)\bigr)=(0,1,0);\quad
m\bigl((14)(23)\bigr)=(1,0,0);\\
m\bigl((\widehat{12},b)\bigr)=m\bigl((\widehat{34},b)\bigr)=\left(\frac12,\frac12,0\right);\qquad
m\bigl((\widehat{13},b)\bigr)=m\bigl((\widehat{24},b)\bigr)=\left(\frac12,0,\frac12\right);\\
m\bigl((\widehat{23},b)\bigr)=m\bigl((\widehat{14},b)\bigr)=\left(0,\frac12,\frac12\right);\qquad
m\bigl((a,b)\bigr)=\left(\frac13,\frac13,\frac13\right).
\end{gather*}
It can be immediately checked that the simplicial mapping~$m$ is
well defined and $(S_4\times S_3)$-equivariant. The mapping of the
geometric realizations of the complexes~$\bX$ and~$(\Delta^2)'$
induced by the mapping~$m$ will also be denoted by~$m$.

Ahead with the mapping~$\mu$, it is convenient to consider the
mapping~$\tilde{\mu}:\CP^2\to\Delta^2$ given by
$$
\tilde{\mu}(z_1:z_2:z_3)=\frac{\bigl(|z_1|,|z_2|,|z_3|\bigr)}{|z_1|+|z_2|+|z_3|}.
$$
We have~$\tilde{\mu}=\mu\circ g$, where $g:\CP^2\to\CP^2$ is the
homeomorphism given by
$$
g(z_1:z_2:z_3)=\left(\frac{z_1}{\sqrt{|z_1|}}:\frac{z_2}{\sqrt{|z_2|}}:
\frac{z_3}{\sqrt{|z_3|}}\right).
$$
Here one should replace~$\frac{z_j}{\sqrt{|z_j|}}$ by~$0$
if~$z_j=0$. Obviously, the mappings~$\tilde{\mu}$ and~$g$ are
equivariant.

\begin{propos}
The simplicial mapping~$m$ triangulates the mapping~$\tilde{\mu}$,
that is, there is a commutative diagram
$$
\begin{CD}
|\bX| @>f>> \CP^2\\
@VmVV @V{\tilde{\mu}}VV\\
|(\Delta^2)'| @= \Delta^2
\end{CD}
$$
\end{propos}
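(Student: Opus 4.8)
The plan is to exploit the $(S_4\times S_3)$-equivariance of all three maps. By the construction of Section~\ref{section_explicit} the homeomorphism~$f$ is $(S_4\times S_3)$-equivariant (the restrictions of~$f$ to all four-simplices of~$\bX$ are obtained from those to~$\sigma_1$ and~$\sigma_2$ by the group action), the map~$\tilde\mu$ is equivariant as noted above, and the simplicial map~$m$ is equivariant by construction. Hence $\tilde\mu\circ f$ and~$m$ are two equivariant continuous maps $|\bX|\to\Delta^2$. Since $|\bX|$ is covered by its closed four-dimensional simplices, and these form two $(S_4\times S_3)$-orbits with representatives~$\sigma_1$ and~$\sigma_2$ from~(\ref{eq_sigma}), it suffices to check that $\tilde\mu\circ f=m$ on~$\sigma_1$ and on~$\sigma_2$.

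For~$\sigma_1$ I would substitute the barycentric-coordinate formula~(\ref{eq_emb1'}) for~$f|_{\sigma_1}$ and then apply~$\tilde\mu$. The three homogeneous coordinates in~(\ref{eq_emb1'}) are nonnegative real numbers times unit complex numbers, so passing to moduli simply deletes the exponential factors, leaving $|z_1|=\frac{p_2+p_3+p_4}{3}$, $|z_2|=\frac{p_1}{2}+\frac{p_2+p_3+p_4}{3}$, $|z_3|=p_0+\frac{p_1}{2}+\frac{p_2+p_3+p_4}{3}$, with sum identically $p_0+p_1+p_2+p_3+p_4=1$. Thus $\tilde\mu(f(p))$ equals exactly this triple, which is an affine function of the barycentric coordinates. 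On the other hand the vertex values of~$m$ on~$\sigma_1$ are $(0,0,1)$, $(0,\tfrac12,\tfrac12)$, and $(\tfrac13,\tfrac13,\tfrac13)$ three times, so $m(p)=p_0(0,0,1)+p_1(0,\tfrac12,\tfrac12)+(p_2+p_3+p_4)(\tfrac13,\tfrac13,\tfrac13)$, which is the same triple. The argument for~$\sigma_2$ is identical: applying~$\tilde\mu$ to~(\ref{eq_emb2'}) gives moduli $\frac{q_3+q_4}{3}$, $\frac{q_1+q_2}{2}+\frac{q_3+q_4}{3}$, $q_0+\frac{q_1+q_2}{2}+\frac{q_3+q_4}{3}$ (sum~$1$), while the vertex values of~$m$ on~$\sigma_2$ yield $m(q)=q_0(0,0,1)+(q_1+q_2)(0,\tfrac12,\tfrac12)+(q_3+q_4)(\tfrac13,\tfrac13,\tfrac13)$; the two coincide.

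There is no serious obstacle here, only bookkeeping: the only structural ingredient is the equivariance reduction, which spares us from examining all $288$ four-simplices of~$\bX$, and the computational content is the observation that $\tilde\mu\circ f$ sees only the moduli of the coordinates in~(\ref{eq_emb1'}) and~(\ref{eq_emb2'}), which were arranged to be precisely the affine functions of barycentric coordinates dictated by the vertex values of~$m$. In effect the formulas of Section~\ref{section_explicit} and the definition of~$m$ were tailored to make this square commute. The only point meriting a line of care is the behaviour on the loci where the exponents in~(\ref{eq_emb1'}) or~(\ref{eq_emb2'}) are formally singular (e.g.\ $p_0=1$): there the relevant data are still the moduli, which remain the stated affine functions, so $\tilde\mu\circ f$ is continuous and agrees with~$m$ on the closed simplices~$\sigma_1$ and~$\sigma_2$, and hence on all of~$|\bX|$ by equivariance.
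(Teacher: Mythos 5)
Your proposal is correct and follows exactly the paper's own argument: reduce by $(S_4\times S_3)$-equivariance of $m$ and $\tilde\mu\circ f$ to the two representative simplices $\sigma_1$ and $\sigma_2$, then check agreement using formulae~(\ref{eq_emb1'}) and~(\ref{eq_emb2'}). You merely carry out explicitly the modulus computation that the paper dismisses as following ``easily'' from those formulae, and your computation is accurate.
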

\begin{proof}
Both mappings~$m$ and~$\tilde{\mu}\circ f$ are $(S_4\times
S_3)$-equivariant. Hence we suffices to check that they coincide
on two four-dimensional simplices of~$\bX$ representing different
$(S_4\times S_3)$-orbits, for example, on the simplices~$\sigma_1$
and~$\sigma_2$ (see~(\ref{eq_sigma})). The coincidence of the
mappings~$m$ and~~$\tilde{\mu}\circ f$ on the simplices~$\sigma_1$
and~$\sigma_2$ follows easily from formulae~(\ref{eq_emb1'})
and~(\ref{eq_emb2'}).
\end{proof}

Now to obtain a triangulation of the moment mapping~$\mu$ one
should replace the homeomorphism~$f:|\bX|\to\CP^2$ by the
homeomorphism~$g\circ f$.

The preimage of the barycenter of the triangle~$\Delta^2$ under
the mapping~$m$ is the subcomplex~$\T\subset\bX$ isomorphic to the
$12$-vertex triangulation of the two-dimensional torus shown in
Fig.~\ref{fig_t12}. The preimage of the midpoint of every edge of
the triangle~$\Delta^2$ is a subcomplex of~$\bX$ isomorphic to the
boundary of the hexagon. The preimage of every vertex
of~$\Delta^2$ is a vertex of~$\bX$.

\section{Relationship with complex crystallographic groups}
\label{section_crystal}

In this section we shall conveniently regard the group~$S_3$ as
the group of permutations of the set~$\Z_3=\{0,1,2\}$ rather than
the set~$\{1,2,3\}$.

Recall that a \textit{crystallographic group} is a cocompact
discrete group of isometries of a finite-dimensional Euclidean
space, that is, a cocompact subgroup of the semidirect
product~$\R^n\leftthreetimes O(n)$. Similarly, a \textit{complex
crystallographic group} is a cocompact discrete subgroup of the
group~$\bC^n\leftthreetimes U(n)$, which is the group of those
transformations of a finite-dimensional Hermitian space that are
compositions of unitary transformations and translations.

The relationship of K\"uhnel's $9$-vertex triangulation of~$\CP^2$
with complex crystallographic groups was discovered by B.\,Morin
and M.\,Yoshida~\cite{MoYo91} (see also~\cite{ArMa91}). In this
section we recall some results of the
papers~\cite{MoYo91},~\cite{ArMa91} and explain the relationship
between the constructed $15$-vertex triangulation~$X$ with complex
crystallographic groups.

Let $\tau$ be a complex number with positive imaginary part.
By~$L=L(\tau)$ we denote the lattice in~$\bC$ with
basis~$(1,\tau)$. We consider the complex torus~$T^2=\bC/L$, the
$6$-dimensional torus~$T^6=T^2\times T^2\times T^2$, and the
subtorus
$$
T^4=\{(z_1,z_2,z_3)\in T^2\times T^2\times T^2\mid z_1+z_2+z_3=0\}
$$
of the torus~$T^6$. The group $S_3$ acts on~$T^6$ by permutations
of the multipliers~$T^2$. The torus~$T^4$ is invariant under this
action. The following proposition is well known.
\begin{propos}
The quotient~$T^4/S_3$ is homeomorphic to~$\CP^2$.
\end{propos}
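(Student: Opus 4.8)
The plan is to realize the quotient $T^4/S_3$ explicitly by first identifying $T^4$ with a familiar object and then describing the $S_3$-action combinatorially. The key observation is that $T^4$ is itself a complex torus: the map $(z_1,z_2,z_3)\mapsto(z_1,z_2)$ identifies $T^4$ with $T^2\times T^2 = \bC^2/(L\times L)$, with $z_3=-z_1-z_2$ recovered from the constraint. Under this identification I would track what the $S_3$-action becomes. The transposition swapping the first two factors acts as $(z_1,z_2)\mapsto(z_2,z_1)$, and the transposition swapping the last two factors acts as $(z_1,z_2)\mapsto(z_1,-z_1-z_2)$; together these generate an $S_3$-action on $\bC^2/(L\times L)$ by affine transformations whose linear parts lie in $GL(2,\Z)$. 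Concretely, the linear parts are the matrices $\begin{pmatrix}0&1\\1&0\end{pmatrix}$ and $\begin{pmatrix}1&0\\-1&-1\end{pmatrix}$, which indeed generate a copy of $S_3$ inside $GL(2,\Z)$ (these are reflections in the root system $A_2$).

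**Next I would** recognize this as the standard picture of the quotient of a complex two-torus by the Weyl group of $A_2$. The point is that $L\times L$ together with the $S_3$-action is, up to an $\R$-linear change of coordinates on $\bC^2$, the $A_2$ root lattice with its Weyl group action; equivalently, $T^4$ is the product of the elliptic curve $E=\bC/L$ with itself, carrying the $S_3$-action by deck transformations coming from the three embeddings of $E$ into $T^6$. One then invokes the classical fact (going back to essentially the theory of elliptic curves, and used in the crystallographic-group literature cited, e.g. \cite{MoYo91}) that for any elliptic curve $E$ the quotient $(E\times E)/S_3$ is $\CP^2$, where $S_3$ acts so that $E$ embeds as $\{(z_1,z_2,z_3):z_1+z_2+z_3=0\}\subset E^3$ with $S_3$ permuting coordinates. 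This can be proved either via the description of $E$ as a plane cubic (the Abel–Jacobi map sends $(P,Q)\mapsto$ the third point of the cubic collinear with $P,Q$, and symmetric functions of the three points give coordinates on $\CP^2$), or via the observation that the quotient is a smooth simply-connected projective surface with the right Betti numbers — computed by the Lefschetz fixed-point / orbifold Euler characteristic — and then applying the classification of surfaces.

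**The main obstacle** is making the last step rigorous without simply quoting it as ``well known''; the cleanest self-contained route is the Lefschetz-number computation. One would compute $H^*(T^4;\Q)$ with its $S_3$-action (using that $H^*(T^4)$ is an exterior-algebra quotient and that $S_3$ acts through the standard $2$-dimensional representation on $H^1$), take $S_3$-invariants to get the rational cohomology of the quotient, and find $b_0=b_2=b_4=1$, $b_1=b_3=0$ — matching $\CP^2$. Combined with the fact that $T^4/S_3$ is a simply-connected (the $S_3$-action has fixed points, so $\pi_1$ of the quotient is trivial) compact complex surface, one concludes by the Kodaira–Enriques classification, or more elementarily by noting that $T^4/S_3$ is the underlying space of a weighted projective structure coming from the $A_2$ Weyl chamber. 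I would organize the written proof as: (i) identify $T^4\cong E\times E$ and compute the $S_3$-action; (ii) exhibit the quotient map explicitly via symmetric functions of the three collinear points on a plane cubic model of $E$, giving a holomorphic bijection $(E\times E)/S_3\to\CP^2$; (iii) conclude it is a homeomorphism since a continuous bijection between compact Hausdorff spaces is a homeomorphism. Step (ii) is where the real content lies, and where I would be most careful to verify that the map is well-defined on the quotient and bijective.
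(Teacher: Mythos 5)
Your main route (steps (i)--(iii)) is essentially the paper's own proof: the paper likewise realizes $T^2$ as a plane cubic $E\subset\CP^2$, uses that $z_1+z_2+z_3=0$ exactly when the corresponding points of $E$ are collinear, and identifies $T^4/S_3$ with the space of lines in $\CP^2$, i.e.\ the dual projective plane; your ``symmetric functions of the three points'' is best made precise in exactly this way, as the map sending an unordered zero-sum triple to the line it spans. The alternative Lefschetz-number/classification route you sketch is unnecessary and shakier --- in particular the parenthetical claim that the mere existence of fixed points forces $\pi_1$ of the quotient to vanish is not a valid argument --- so the collinearity argument is the one to write up.
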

\begin{proof}
The complex torus~$T^2$ can be realized as an elliptic
curve~$E\subset\CP^2$. Three points ~$z_1,z_2,z_3\in T^2$ satisfy
the equality~$z_1+z_2+z_3=0$ if and only if the corresponding
three points of~$E$ lie in a complex line. Thus we obtain a
homeomorphism between the space~$T^4/S_3$, which is the space of
unordered triples of points~$z_1,z_2,z_3\in T^2$ such that
$z_1+z_2+z_3=0$, and the space of complex lines in~$\CP^2$. The
latter space is obviously homeomorphic to~$\CP^2$.
\end{proof}

This proposition immediately implies that $\CP^2$ is the quotient
of~$\bC^2$ by a complex crystallographic group. Let us describe
this group explicitly. Consider the two-dimensional subspace
$$
W=\{(z_1,z_2,z_3)\in \bC^3\mid z_1+z_2+z_3=0\}\subset\bC^3
$$
and the vectors~$h_0=(1,-1,0)$, $h_1=(0,1,-1)$, and $h_2=(-1,0,1)$
belonging to it. Obviously, $h_0+h_1+h_2=0$. Let us identify the
group~$S_3$ with the group of transformations of~$\bC^3$ that
permutes the coordinates. The subspace~$W$ is invariant under this
group. Consider the lattice~$\Lambda=Lh_0+Lh_1+Lh_2\subset W$ and
the complex crystallographic group~$\Gamma=\Lambda\leftthreetimes
S_3$. Then~$W/\Gamma=T^4/S_3\approx\CP^2$.

Suppose~$f_0=\frac{h_2-h_1}{3}$, $f_1=\frac{h_0-h_2}{3}$, and
$f_2=\frac{h_1-h_0}{3}$; then $h_0=f_1-f_2$, $h_1=f_2-f_0$, and
$h_2=f_0-f_1$. The lattice~$\Lambda_{\R}=\sum_{j=0}^2\Z h_j$ is
the subgroup of index~$3$  in the
lattice~$\overline{\Lambda}_{\R}=\sum_{j=0}^2\Z f_j$ and the
lattice~$\Lambda$ is the subgroup of index~$9$ in the
lattice~$\overline{\Lambda}=\sum_{j=0}^2Lf_j$. Decompose the
space~$W$ into the direct sum~$W=W_{\R}\oplus\tau W_{\R}$,
where~$W_{\R}$ is the two-dimensional real subspace spanned by the
vectors~$h_0,h_1,h_2$. Similarly, we have the decompositions
$\Lambda=\Lambda_{\R}\oplus\tau\Lambda_{\R}$,
$\overline{\Lambda}=\overline{\Lambda}_{\R}\oplus\tau\overline{\Lambda}_{\R}$.
The lattice~$\overline{\Lambda}_{\R}$ is hexagonal.
By~$\widehat{Z}$ we denote the corresponding triangulation of the
plane~$W_{\R}$ by rectilinear triangles. The direct (though not
orthogonal) product of the triangulations~$\widehat{Z}$ and~$\tau
\widehat{Z}$ is a decomposition of the space~$W$ into convex
prisms~$\Delta^2\times\Delta^2$. This decomposition, which will be
denoted by~$\widehat{Q}$, is invariant under the action
of~$\Gamma$ and $Q=\widehat{Q}/\Gamma$ is a decomposition
of~$\CP^2$ into prisms~$\Delta^2\times\Delta^2$. (Later we shall
see that two prisms in the latter decomposition can possess
several common facets.) B.\,Morin and M.\,Yoshida~\cite{MoYo91}
constructed a $\Gamma$-invariant rectilinear
triangulation~$\widehat{K}$ of the space~$W\approx\bC^2$ such that
$K=\widehat{K}/\Gamma$ is a well-defined triangulation of~$\CP^2$
isomorphic to K\"uhnel's $9$-vertex triangulation. P.\,Arnoux and
A.\,Marin~\cite{ArMa91} noticed that the
triangulation~$\widehat{K}$ is a subdivision of the
decomposition~$\widehat{Q}$ such that every
prism~$\Delta^2\times\Delta^2$ is divided into~$6$
four-dimensional simplices without adding new vertices.

The case~$\tau=\omega=e^{\frac{2i\pi}3}$ is of a special interest
since for $\tau=\omega$ the triangulation~$\widehat{K}$ becomes
invariant under a bigger crystallographic
group~$\overline{\Gamma}\supset\Gamma$ (see~\cite{MoYo91}). The
group~$\overline{\Gamma}$ is the semidirect
product~$\overline{\Lambda}\leftthreetimes G_{18}$, where
$G_{18}\subset U(2)$ is the group of order~$18$ generated by the
subgroup~$S_3\subset U(2)$ and the operator of multiplication
by~$\omega$. The quotient group~$\overline{\Gamma}/\Gamma$ has
order~$27$ and is a subgroup of index~$2$ of the automorphism
group of~$K$.

Let us now describe how the triangulation~$X$ appears in the
context of complex crystallographic groups. The multiplication
by~$\omega$ will not be important in this construction. Hence we
can again suppose that $\tau$ is an arbitrary complex number with
positive imaginary part.

First, let us consider the
decomposition~$\widetilde{Q}=\widehat{Q}/\Lambda$ of the
torus~$T^4$. This decomposition is the direct product of the two
identical decompositions~$\widetilde{Z}=\widehat{Z}/\Lambda_{\R}$
of the two-dimensional torus~$W_{\R}/\Lambda_{\R}$ into triangles.
The decomposition~$\widetilde{Z}$ is shown in Fig.~\ref{fig_6}.
(We imply the identification of opposite sides of the hexagon.)
This decomposition is not a triangulation since all $6$ triangles
of this decomposition have the same $3$ vertices. The group~$S_3$
acts on the torus~$W_{\R}/\Lambda_{\R}$ so that the transpositions
$\sigma_0=(12)$, $\sigma_1=(20)$, and $\sigma_2=(01)$ act by
symmetries in the lines ~$\ell_0$, $\ell_1$, and $\ell_2$
respectively. The group~$S_3$ acts transitively on the set of
triangles of the decomposition~$\widetilde{Z}$. We mark the
triangles of~$\widetilde{Z}$ by elements of~$S_3$ in such a way
that the action of~$S_3$ on~$\widetilde{Z}$ coincides with the
action of~$S_3$ on itself by left shifts. Then four-dimensional
cells of the decomposition~$\widetilde{Q}=\widetilde{Z}\times
\widetilde{Z}$ are marked by ordered pairs of elements of~$S_3$
and the action of~$S_3$ is diagonal. Therefore the
decomposition~$Q=\widetilde{Q}/S_3$ contains $6$ four-dimensional
cells which can be marked by permutations~$\varkappa\in S_3$ so
that under a factorization by the action of~$S_3$ the cell with
mark~$(\varkappa_1,\varkappa_2)$ goes to the cell with
mark~$\varkappa_1^{-1}\varkappa_2$. The four-dimensional cell
of~$Q$ with mark~$\varkappa$ will be denoted by~$P_{\varkappa}$.

\begin{figure}
\unitlength=0.3cm
\begin{picture}(20,16)

\put(10,8){\begin{picture}(0,0)

\put(-6,0){\circle*{0.5}}

\put(6,0){\circle*{0.5}}

\put(-3,5){\circle*{0.5}}

\put(-3,-5){\circle*{0.5}}

\put(3,5){\circle*{0.5}}

\put(3,-5){\circle*{0.5}}

\put(-6.5,0){\line(1,0){13}}

\put(-8,0){\line(1,0){1}}

\put(7,0){\line(1,0){1}}

\put(-9.5,0){\line(1,0){1}}

\put(8.5,0){\line(1,0){1}}

\put(-3.6,6){\line(3,-5){7.2}}

\put(-3.9,6.5){\line(-3,5){0.75}}

\put(3.9,-6.5){\line(3,-5){0.75}}

\put(3.6,6){\line(-3,-5){7.2}}

\put(3.9,6.5){\line(3,5){0.75}}

\put(-3.9,-6.5){\line(-3,-5){0.75}}

\put(0.8,0.3){0}

\put(-3,5.3){1}

\put(6.3,0.3){1}

\put(-3,-6.3){1}

\put(2.3,5.3){2}

\put(-6.9,0.3){2}

\put(2.3,-6.3){2}

\put(4.5,6.3){$\ell_2$}

\put(-9,-1.3){$\ell_0$}

\put(4.7,-7.3){$\ell_1$}

\put(2.7,1.2){\large$e$}

\put(-1.1,3){$(01)$}

\put(1.9,-2){$(12)$}

\put(-4.1,-2){$(02)$}

\put(-4.5,1.2){$(012)$}

\put(-1.5,-3.8){$(021)$}

\thicklines

\put(-3,5){\line(1,0){6}}

\put(-3,-5){\line(1,0){6}}

\put(-3,5){\line(-3,-5){3}}

\put(-3,-5){\line(-3,5){3}}

\put(3,5){\line(3,-5){3}}

\put(3,-5){\line(3,5){3}}

\end{picture}}

\end{picture}

\caption{The decomposition~$\widetilde{Z}$ of the
torus~$W_{\R}/\Lambda_{\R}$ into $6$ triangles} \label{fig_6}
\end{figure}

We mark vertices of the decomposition~$\widetilde{Z}$ by
elements~$0,1,2\in\Z_3$ as it is shown in Fig.~\ref{fig_6}. Then
vertices of the decomposition~$\widetilde{Q}$ are marked by
pairs~$(a_1,a_2)\in\Z_3\times\Z_3$. The vertex corresponding to a
pair~$(a_1,a_2)$ will be denoted by~$u(a_1,a_2)$. The
vertices~$u(a_1,a_2)$ are fixed by the action of~$S_3$. Hence
these vertices are exactly the vertices of the decomposition~$Q$.
Thus $Q$ is a $9$-vertex decomposition of~$\CP^2$ into $6$
four-dimensional cells~$P_{\varkappa}$. Every cell~$P_{\varkappa}$
is a prism~$\Delta^2\times\Delta^2$ and the vertices of each
multiplier~$\Delta^2$ are marked by pairwise distinct
elements~$0,1,2$. Faces of the prism~$\Delta^2\times\Delta^2$ are
in one-to-one correspondence with pairs of
subsets~$A_1,A_2\subset\Z_3$. To a pair $(A_1,A_2)$ we assign the
face spanned by all vertices~$u(a_1,a_2)$ such that $a_1\in A_1$
and $a_2\in A_2$. The face corresponding to a pair~$(A_1,A_2)$ is
called a face of type~$(A_1,A_2)$.

Now, for two distinct four-dimensional cells~$P_{\varkappa_1}$
and~$P_{\varkappa_2}$ of~$Q$ we compute the number of common
facets of the cells~$P_{\varkappa_1}$ and~$P_{\varkappa_2}$. If
one of the permutations~$\varkappa_1$ and~$\varkappa_2$ is even
and the other is odd, then the cells~$P_{\varkappa_1}$
and~$P_{\varkappa_2}$ have two common facets of
types~$(\{a_1+1,a_1+2\},\{1,2,3\})$
and~$(\{1,2,3\},\{a_2+1,a_2+2\})$,
where~$\varkappa_2=\sigma_{a_1}\varkappa_1=\varkappa_1\sigma_{a_2}$.
If the permutations~$\varkappa_1$ and~$\varkappa_2$ are either
both even or both odd, then the cells~$P_{\varkappa_1}$
and~$P_{\varkappa_2}$ have no common facets. Thus the
decomposition~$Q$ contains $9$ quadrangular two-dimensional cells
each of which is contained in exactly two three-dimensional cells
and exactly two four-dimensional cells. Perform the following
operation with the decomposition~$Q$. Delete from this
decomposition those $9$ quadrangular two-dimensional faces and
unite the pair of common facets of each two
prisms~$P_{\varkappa_1}$ and~$P_{\varkappa_2}$ so as to obtain one
three-dimensional cell with combinatorial type of the polytope
shown in Fig.~\ref{fig_prism},\textit{а}. Four-dimensional cells
of the obtained decomposition are just four-dimensional cells
of~$Q$. However the cell decompositions of their boundaries are
different. Every four-dimensional cell of the obtained
decomposition is a four-dimensional disk with boundary decomposed
into~$3$ three-dimensional cells combinatorially equivalent to the
polytope shown in Fig.~\ref{fig_prism},\textit{а}. The obtained
cell decomposition of~$\CP^2$ will be denoted by~$Q^{(1)}$ and its
four-dimensional cells  will be denoted by~$P_{\varkappa}^{(1)}$.

\begin{figure}
\unitlength=0.3cm
\begin{picture}(40,17)
\multiput(0,3)(25,0){2}{\begin{picture}(0,0)

\put(3,0){\circle*{0.3}}

\put(1,4){\circle*{0.3}}

\put(11,4){\circle*{0.3}}

\put(7,6){\circle*{0.3}}

\put(3,10){\circle*{0.3}}

\put(15,10){\circle*{0.3}}

\put(1,14){\circle*{0.3}}

\put(11,14){\circle*{0.3}}


\multiput(1,4)(1.5,0){7}{\line(1,0){1}}

\put(1,4){\line(0,1){10}}

\put(3,0){\line(0,1){10}}


\multiput(11,4)(0,1.5){7}{\line(0,1){1}}

\put(1,14){\line(1,0){10}}

\put(1,4){\line(1,-2){2}}

\put(1,14){\line(1,-2){2}}

\put(3,0){\line(2,1){8}}

\put(3,10){\line(2,1){8}}

\put(7,6){\line(2,1){8}}

\put(3,10){\line(1,-1){4}}

\put(11,14){\line(1,-1){4}}

\put(3,0){\line(2,3){4}}

\put(11,4){\line(2,3){4}}

\end{picture}}

\put(26,7){\line(1,3){2}}

\put(28,13){\line(1,0){12}}

\put(32,9){\line(2,-1){4}}

\multiput(26,17)(1.7,-1.7){6}{\line(1,-1){1.2}}

\put(6,0){\textit{a}}

\put(31,0){\textit{b}}

\end{picture}

\caption{Three-dimensional cells of the decompositions~$Q^{(1)}$
and $Q^{(2)}$} \label{fig_prism}
\end{figure}
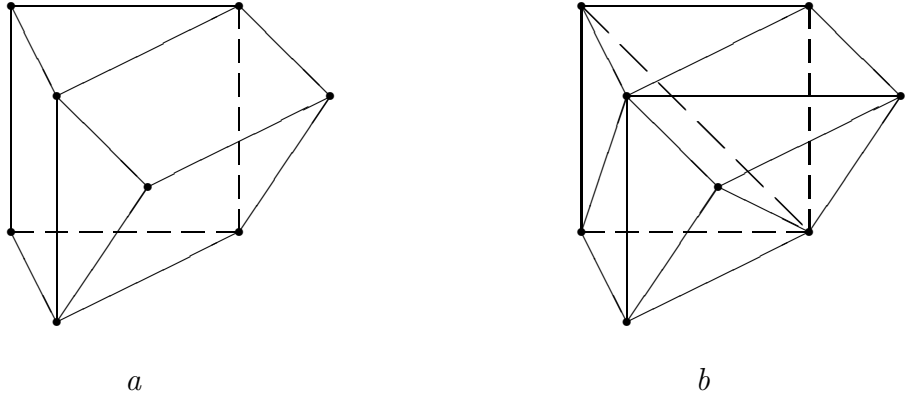

Now we consider the two-dimensional skeleton of~$Q^{(1)}$. It
consists of triangular and quadrangular cells. Every quadrangular
cell has vertices~$u(a,b)$, $u(a+1,b)$, $u(a+1,b+1)$, and
$u(a,b+1)$ for some $a,b\in\Z_3$. Decompose every such
quadrangular cell into two triangular cells by the diagonal with
vertices~$u(a,b)$ and~$u(a+1,b+1)$. We denote the obtained cell
decomposition by~$Q^{(2)}$ and denote its four-dimensional cells
by~$P_{\varkappa}^{(2)}$. Certainly, the combinatorial types of
three-dimensional and four-dimensional cells have changed under
this operation. In particular, all three-dimensional cells have
become isomorphic to the suspension over a hexagon (see
Fig.~\ref{fig_prism},\textit{b}). Notice that every two distinct
four-dimensional cells~$P_{\varkappa_1}^{(2)}$
and~$P_{\varkappa_2}^{(2)}$ of~$Q^{(2)}$ have no common facets if
the permutations~$\varkappa_1$ and~$\varkappa_2$ are either both
even or both odd and have a unique common facet if one of the
permutations~$\varkappa_1$ and~$\varkappa_2$ is even and the other
is odd. (The same assertion holds for~$Q^{(1)}$.)

The two-dimensional skeleton of the decomposition~$Q^{(2)}$ is a
decomposition into triangles. It can be immediately checked that
the two-dimensional skeleton of~$Q^{(2)}$ is a simplicial complex,
that is, it does not contain multiple edges and triangles with
coinciding boundary. (This fact will be very important for us.)
Indeed, the construction described above provides an explicit
description of two-dimensional faces of~$Q^{(2)}$. Every
two-dimensional face of~$Q^{(2)}$ is a triangle spanned by a set
of vertices of one of the following types.

1) $u(a,0)$, $u(a,1)$, $u(a,2)$.

2) $u(0,a)$, $u(1,a)$, $u(2,a)$.

3) $u(a+1,b+1)$, $u(a+2,b+1)$, $u(a+2,b+2)$.

4) $u(a+1,b+1)$, $u(a+1,b+2)$, $u(a+2,b+2)$.

Suppose that~$\varkappa_1,\varkappa_2\in S_3$ are permutations one
of which is even and the other is odd. Then the triangle of
type~1) is contained in~$P^{(2)}_{\varkappa_1}\cap
P^{(2)}_{\varkappa_2}$ if and only
if~$\varkappa_2\ne\sigma_a\varkappa_1$, the triangle of type~2) is
contained in~$P^{(2)}_{\varkappa_1}\cap P^{(2)}_{\varkappa_2}$ if
and only if~$\varkappa_2\ne\varkappa_1\sigma_a$, and the triangle
of type~3) or of type~4) is contained
in~$P^{(2)}_{\varkappa_1}\cap P^{(2)}_{\varkappa_2}$ if and only
if exactly one of the two equalities
$\varkappa_2=\sigma_a\varkappa_1$ and
$\varkappa_2=\varkappa_1\sigma_b$ holds.

Now let us construct a $15$-vertex triangulation~$Y$ of~$\CP^2$ in
the following way. Introduce a new vertex~$u(\varkappa)$ in the
interior of every  four-dimensional cell~$P^{(2)}_{\varkappa}$.
Decompose every cell~$P^{(2)}_{\varkappa}$ into the cones over its
facets with vertex~$u(\varkappa)$. Every three-dimensional
face~$F$ of~$Q^{(2)}$ is contained in two four-dimensional
cells~$P^{(2)}_{\varkappa_1}$ and~$P^{(2)}_{\varkappa_2}$. Uniting
the cones over~$F$ with vertices~$u(\varkappa_1)$
and~$u(\varkappa_2)$ we obtain the suspension over~$F$.
Triangulate this suspension as the join of the segment with
endpoints~$u(\varkappa_1)$ and~$u(\varkappa_2)$ and the
triangulation of~$\partial F$. (Recall that the two-dimensional
skeleton of~$Q^{(2)}$ is a simplicial complex.) Performing the
described operation for all three-dimensional faces of~$Q^{(2)}$,
we obtain a triangulation, which we denote by~$Y$. The
triangulation~$Y$ has $15$ vertices among which there are $9$
vertices~$u(a,b)$ and $6$ vertices~$u(\varkappa)$.
Four-dimensional simplices are spanned by the following sets of
vertices.

 1) $u(\varkappa)$, $u(\sigma_b\varkappa)$, $u(a,0)$,
$u(a,1)$, $u(a,2)$,\ \ \ $a\ne b$;

2) $u(\varkappa)$, $u(\varkappa\sigma_b)$, $u(0,a)$, $u(1,a)$,
$u(2,a)$,\ \ \ $a\ne b$;

3) $u(\varkappa)$, $u(\sigma_a\varkappa)$, $u(a+1,b+1)$,
$u(a+2,b+1)$, $u(a+2,b+2)$,\ \ \ $a\ne \varkappa(b)$;

4) $u(\varkappa)$, $u(\sigma_a\varkappa)$, $u(a+1,b+1)$,
$u(a+1,b+2)$, $u(a+2,b+2)$,\ \ \ $a\ne \varkappa(b)$;

5) $u(\varkappa)$, $u(\varkappa\sigma_b)$, $u(a+1,b+1)$,
$u(a+2,b+1)$, $u(a+2,b+2)$,\ \ \ $a\ne \varkappa(b)$;

6) $u(\varkappa)$, $u(\varkappa\sigma_b)$, $u(a+1,b+1)$,
$u(a+1,b+2)$, $u(a+2,b+2)$,\ \ \ $a\ne \varkappa(b)$.

The number of simplices of each type is equal to~$18$.

\begin{propos}
The triangulation~$Y$ is isomorphic to the simplicial complex~$X$.
\end{propos}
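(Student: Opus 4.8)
The plan is to exhibit an explicit bijection $\phi\colon V(Y)\to V(X)$ of the two $15$-element vertex sets and to check that it sends every four-dimensional simplex of $Y$ to a four-dimensional simplex of $X$. This suffices: both $X$ and $Y$ are pure four-dimensional complexes --- each simplex of either is a face of a top simplex, which is clear from how they are built --- so each is recovered from its set of facets, and hence a vertex bijection that takes facets into facets and preserves the number of facets is already a simplicial isomorphism. Here $Y$ has $6\cdot18=108$ four-simplices (the six families 1)--6)) and the $f$-vector of $X$ ends in $108$; since $\phi$ is bijective on vertices it is injective on $5$-element subsets, so once $\phi(\sigma)\in X$ for every four-simplex $\sigma$ of $Y$, the image $\phi(\{\text{facets of }Y\})$ is a $108$-element subset of the $108$ facets of $X$, hence equal to it.

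Constructing $\phi$ is the one creative step, and I would pin it down by matching combinatorial invariants. The complement of the $1$-skeleton of $X$ is a disjoint union of five triangles: the triple $(12)(34),(13)(24),(14)(23)$, and, for each $i\in\{1,2,3,4\}$, the triple $(i,1),(i,2),(i,3)$ --- this is read off from the link descriptions in Section~\ref{section_constr}, since two vertices of $X$ fail to be joined exactly when they are two distinct elements of $V_4\setminus\{e\}$ or two vertices $(i,j),(i,j')$ with the same first index. Running through the families 1)--6), one finds that the complement of the $1$-skeleton of $Y$ is likewise a union of five triangles: the three ``anti-diagonal'' triples $\{u(a,b):a+b=c\}$, $c\in\Z_3$ (two vertices $u(a,b),u(a',b')$ are not joined precisely when $(a,b)\ne(a',b')$ and $a+b=a'+b'$), and the two triples $\{u(\varkappa):\varkappa\ \text{even}\}=\{u(e),u(012),u(021)\}$ and $\{u(\varkappa):\varkappa\ \text{odd}\}$ (the prisms $P^{(2)}_{\varkappa_1},P^{(2)}_{\varkappa_2}$ share a facet, equivalently $u(\varkappa_1)\sim u(\varkappa_2)$ in $Y$, iff $\varkappa_1,\varkappa_2$ have opposite parity, while every $u(\varkappa)$ is joined to every $u(a,b)$). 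So $\phi$ must carry triangles to triangles. Among the five triangles of $Y$ exactly one has the property that the links of all three of its vertices are joins of two hexagons --- the telltale feature of the vertices $\nu$ of $X$ (their links are exactly such joins, by Fig.~\ref{fig_link(12)(34)}), whereas the links of the vertices $(i,j)$ are the more complicated $3$-spheres isomorphic to $\N$; matching link types identifies this triangle (it turns out to be $\{u(e),u(012),u(021)\}$), and $\phi$ sends it to $(12)(34),(13)(24),(14)(23)$ in a suitable order and the remaining twelve vertices of $Y$ to the twelve vertices $(i,j)$ of $X$, respecting the other four triangles. Writing $\phi$ down on those twelve vertices and fixing the order of the three $\nu$'s uses the identification $S_4/V_4\cong S_3$ and the bookkeeping of the successive subdivisions from $Q$ to $Q^{(1)}$, $Q^{(2)}$ and $Y$; in practice the map is forced once one also demands that the ``rows'' $\{u(0,a),u(1,a),u(2,a)\}$ and ``columns'' $\{u(a,0),u(a,1),u(a,2)\}$ of Fig.~\ref{fig_6} --- which by families 1) and 2) each span a $2$-simplex contained in six facets --- go to the $2$-simplices of $X$ whose three vertices have pairwise distinct colours.

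With $\phi$ written down, the verification is finite and mechanical: for each of the six families one substitutes $\phi$ and checks that the image is a set $\nu,(1,b_1),(2,b_2),(3,b_3),(4,b_4)$ with $b_{\nu(i)}\ne b_i$ for all $i$, the defining condition for a facet of $X$. The work can be roughly halved using the symmetry manifest on $Y$ --- for instance the simplicial involution $u(a,b)\mapsto u(b,a)$, $u(\varkappa)\mapsto u(\varkappa^{-1})$ coming from interchanging the two $\widetilde Z$-factors of $\widetilde Q$, which sends $P^{(2)}_\varkappa$ to $P^{(2)}_{\varkappa^{-1}}$ and permutes the families in the pairs $\{1,2\}$, $\{3,6\}$, $\{4,5\}$ --- together with the full automorphism group $S_4\times S_3$ of $X$. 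The main obstacle is the construction of $\phi$: getting the triangle correspondence and the precise vertex dictionary right, and checking that the resulting map is well defined. Once that is done the rest is routine, and the facet count of the first paragraph closes the proof.
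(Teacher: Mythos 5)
Your framing is fine as far as it goes: the facet-counting reduction in your first paragraph is valid (both complexes are generated by their $108$ top simplices, so a vertex bijection sending facets into facets is automatically an isomorphism), and your description of the non-edges of both complexes as five disjoint triangles is correct. But the actual content of the proposition is the explicit vertex dictionary and its verification, and your proposal supplies neither: the map $\phi$ is only asserted to be ``forced'' by various constraints, and the check that facets go to facets is deferred as ``finite and mechanical''. Worse, the one concrete commitment you do make is wrong. The triangle of~$Y$ whose vertices have links isomorphic to the join of two hexagons is the \emph{odd} triple $\{u(\sigma_0),u(\sigma_1),u(\sigma_2)\}$, not $\{u(e),u((012)),u((021))\}$. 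Indeed, in the link of~$u(e)$ the vertex $u(0,0)$ has five non-neighbours: $u(1,2)$ and $u(2,1)$ (same anti-diagonal, hence non-adjacent already in~$Y$), $u(1,1)$ and $u(2,2)$ (no facet of~$Y$ contains $u(e)$ together with two vertices of the main diagonal --- the relevant type 3) and 4) facets through such a triangle require $\varkappa(2)\ne 2$, which fails for $\varkappa=e$ and $\varkappa=\sigma_2$), and $u(\sigma_0)$; in the join of two hexagons every vertex has exactly three non-neighbours. So a dictionary built on your identification cannot extend to an isomorphism, and the ``forcing'' argument you appeal to would have led you to a map that fails the verification you never carried out.

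For comparison, the paper simply writes the dictionary down: $(12)(34)\mapsto u(\sigma_0)$, $(13)(24)\mapsto u(\sigma_1)$, $(14)(23)\mapsto u(\sigma_2)$, $(4,b)\mapsto u\bigl((012)^b\bigr)$, and $(a,b)\mapsto u(-a-b,-a+b)$ for $a,b=1,2,3$ (mod~$3$), so the three $\nu$'s go to the odd vertices and the even vertices correspond to $(4,1),(4,2),(4,3)$. It then verifies the isomorphism not facet-by-facet but by characterizing both complexes through their minimal non-faces (prohibited pairs and triples of vertices) and checking that the correspondence matches these lists --- a shorter finite check than running through the six families of facets. Your counting reduction would also work once a correct $\phi$ is in hand, but as it stands the proposal is missing the map itself, and its only concrete guess at the map is incorrect.
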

\begin{proof}
Arrange a one-to-one correspondence between the vertices of~$X$
and the vertices of~$Y$ in the following way.
\begin{gather*}
(12)(34)\mapsto u(\sigma_0);\quad (13)(24)\mapsto
u(\sigma_1);\quad (14)(23)\mapsto u(\sigma_2);\\
(4,b)\mapsto u\bigl((012)^b\bigr),\qquad b=1,2,3;\\
(a,b)\mapsto u(-a-b,-a+b),\qquad a,b=1,2,3,
\end{gather*}
where the sums in the last formula are taken modulo~$3$. Notice
that a set of pairwise distinct vertices of~$X$ spans a simplex if
and only if it contains no pair of vertices of the
form~$(a,b_1),(a,b_2)$ or of the form~$\nu_1,\nu_2$ and no triple
of vertices of the form~$(a_1,b),(a_2,b),(a_3,b)$ or of the
form~$\nu,(a,b),(\nu(a),b)$. A set of pairwise distinct vertices
of~$Y$ spans a simplex if and only if it contains no pair of
vertices of the form~$u(a_1,b_1),u(a_2,b_2)$ with
$a_1+b_1=a_2+b_2$ or of the form~$u(\varkappa_1),u(\varkappa_2)$
with $\varkappa_1$ and $\varkappa_2$ either both even or both odd
and no triple of vertices of one of the forms
\begin{gather*}
u(a_0,a_0),u(a_1,a_1+1),u(a_2,a_2+2);\\
u(\varkappa),u(a+1,\varkappa(a)+1),u(a+2,\varkappa(a)+2);\\
u(\varkappa),u(\varkappa\sigma_b),u(\varkappa(b),b).
\end{gather*}
To prove that the complexes~$X$ and~$Y$ are isomorphic one
suffices to notice the constructed one-to-one correspondence
between their vertices takes the ``prohibited'' pairs and triples
of vertices for the complex~$X$ to the ``prohibited'' pairs and
triples of vertices for the complex~$Y$. The latter assertion can
be checked immediately.
\end{proof}
\begin{remark}
Recall that the obtained result on the relationship of the
triangulation~$X$ with complex crystallographic groups is weaker
than a similar result of B.\,Morin and M.\,Yoshida for K\"uhnel's
triangulation~$K$. First, the author does not know whether the
triangulation~$X$ can be obtained as the quotient of a rectilinear
triangulation of~$\bC^2$ by a crystallographic group. Second, the
above construction of the triangulation~$Y$ does not explain why
the automorphism group of the triangulation~$Y$ is so big.
\end{remark}

The author is grateful to V.\,M.\,Buchstaber for constant
attention to this work and fruitful discussion and to
S.\,A.\,Melikhov for useful comments.

\end{document}